\documentclass[11pt]{amsart}

\usepackage[usenames,dvipsnames,svgnames,table]{xcolor}
\usepackage[hyphens]{url}
\usepackage[pagebackref,linktocpage=true,colorlinks=true,linkcolor=Blue,citecolor=BrickRed,urlcolor=RoyalBlue]{hyperref}
\usepackage[msc-links,abbrev]{amsrefs}
\usepackage{amsmath,amsthm,amssymb}

\newcommand{\R}{\mathbf{R}}
\renewcommand{\S}{\mathbf{S}}

\renewcommand{\epsilon}{\varepsilon}
\newcommand{\cl}{\operatorname{cl}}
\newcommand{\ol}{\overline}
\newcommand{\C}{\mathcal{C}}
\newcommand{\inte}{\operatorname{int}}
\newcommand{\vspan}{\operatorname{span}}
\newcommand{\Hess}{\operatorname{Hess}}
\newcommand{\dist}{\operatorname{dist}}

\theoremstyle{plain}
\newtheorem{theorem}{Theorem}[section]
\newtheorem{lemma}[theorem]{Lemma}
\newtheorem{proposition}[theorem]{Proposition}
\newtheorem{corollary}[theorem]{Corollary}

\theoremstyle{definition}

\begin{document}

\title[Mean curvatures  and symmetry]{Mean curvatures and symmetry\\of convex hypersurfaces}

\author{Mohammad Ghomi}
\address{School of Mathematics, Georgia Institute of Technology,
Atlanta, GA 30332}
\email{ghomi@math.gatech.edu}
\urladdr{ghomi.math.gatech.edu}

\begin{abstract}
Let $M^n$ be a $\C^2$ closed convex hypersurface in Euclidean space, and 
$\sigma_m$ be its $m$th mean curvature. We show that $M$ is 
symmetric with respect to a hyperplane orthogonal to a given direction 
$e$, if $\sigma_m(p)\leq\sigma_m(q)$ whenever $p-q$ is parallel to $e$.
For convex hypersurfaces, this settles a conjecture of Li, extends the mean-curvature theorem
of Li--Yan--Yao to all $\sigma_m$, and strengthens some earlier results of
Li--Nirenberg by removing  nondegeneracy assumptions. The proof is based on the theory of mixed volumes, specifically the rigidity of  quermassintegrals under Steiner symmetrization.

\end{abstract}

\date{\today\,(Last Typeset)}

\subjclass[2020]{Primary 52A20, 53C45; Secondary 52A39, 52A40.}
\renewcommand{\subjclassname}{\textup{2020} Mathematics Subject Classification}

\thanks{The author was supported by NSF grant DMS-2202337.}

\keywords{Generalized mean curvatures, Gauss--Kronecker curvature,
Quermassintegral, Continuous Steiner symmetrization, Mixed volume, Shadow system, Alexandrov reflection method.}

\maketitle

\section{Introduction}
Alexandrov \cite{Ale62} showed that a closed embedded 
hypersurface $M\subset\R^{n+1}$ is a sphere if its mean curvature $\sigma_1$ is constant, and Ros \cite{ros1987,montiel-ros1991} extended that result to higher mean curvatures $\sigma_m$, $1\leq m\leq n$.
Following earlier work of Li \cite{li1997},
Li--Nirenberg \cites{li-nirenberg2006a,li-nirenberg2006b} 
studied a refinement of Alexandrov's theorem: must $M$ be symmetric with respect to a hyperplane orthogonal to a direction $e$ if $\sigma_1$ is suitably ordered along $e$? 
Li--Yan--Yao \cite{li-yan-yao2021} answered this question affirmatively  when $M$ lies on one side of its tangent hyperplanes parallel to $e$, or under a substantially weaker condition. Li  then conjectured a similar phenomenon for $\sigma_m$ \cite[Conj.~2]{li2023}, and noted that it is open even for convex $M$. 
 Here we resolve that case:

\begin{theorem}\label{thm:main}
Let $M\subset\R^{n+1}$ be a $\C^2$ hypersurface bounding a convex body $K$, and $e\in\S^n$ be a direction. Suppose that, for some $1\leq m\leq n$,
\begin{equation}\label{eq:sigma}
\sigma_m(p)\leq\sigma_m(q)
\end{equation}
whenever $p-q=\lambda e$, for some $\lambda\geq 0$, and the open
segment joining $p$ and $q$ lies in the interior of $K$. Then $M$ is symmetric with respect to a hyperplane orthogonal to $e$.
\end{theorem}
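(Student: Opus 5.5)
We will use continuous Steiner symmetrization together with the variational formula for quermassintegrals. Write $e=e_{n+1}$ and describe $K$ as
\[
K=\{(x,t): x\in D,\ g(x)\le t\le f(x)\},
\]
where $D$ is the projection of $K$ onto $e^\perp$, $f$ is concave and $g$ is convex. The upper graph $M^+$ is where $t=f(x)$, and the lower graph $M^-$ is where $t=g(x)$. For $s\in[0,1]$ let $K_s$ be the shadow system
\[
K_s=\{(x,t): g(x)-s\,c(x)\le t\le f(x)-s\,c(x)\},\qquad c=\tfrac12(f+g).
\]
Then $K_0=K$ and $K_1$ is the Steiner symmetral of $K$. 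By the theory of shadow systems (Rogers--Shephard, Shephard), the mixed volumes of $K_s$ are convex functions of $s$. In particular the quermassintegral $W_{m}(K_s)$, the one whose first variation is governed by $\sigma_m$, is convex in $s$. We also know that $W_m(K_1)\le W_m(K_0)$, which is the monotonicity of quermassintegrals under Steiner symmetrization.

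\textbf{First variation.} Next we compute the one-sided derivative of $W_m(K_s)$ at $s=0$. The boundary moves with normal speed $-c\,\langle \nu,e\rangle$. The standard first variation formula for quermassintegrals then gives
\[
\frac{d}{ds}\Big|_{s=0^+} W_m(K_s)=\kappa_{n,m}\int_M \sigma_m\,(-c)\,\langle\nu,e\rangle\,dA,
\]
with a positive constant $\kappa_{n,m}$, up to the index normalization. We split this integral into $M^+$ and $M^-$ and pull both pieces back to $D$ via $dA\,\langle\nu,e\rangle=\pm dx$. This turns the derivative into
\[
-\kappa\int_D c(x)\bigl(\sigma_m(x,f(x))-\sigma_m(x,g(x))\bigr)\,dx .
\]
Vertical chords with nonempty open segment in $\inte K$ are exactly those over $\inte D$, so hypothesis \eqref{eq:sigma} applies with $p$ on top and $q$ on bottom and gives $\sigma_m^+\le\sigma_m^-$. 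The sign of $c$ is a problem, since it depends on where the origin sits along $e$. We will exploit the translation freedom: translating $K$ by $\tau e$ replaces $c$ by $c+\tau$. The derivative of $W_m$ is translation invariant, so
\[
\int_D(\sigma_m^+-\sigma_m^-)\,dx=0 .
\]
This integral identity holds for every compact convex body, because the integrand is a divergence of Minkowski type. Combined with $\sigma_m^+-\sigma_m^-\le 0$, it forces $\sigma_m^+=\sigma_m^-$ on $\inte D$.

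\textbf{Main obstacle: upgrading pointwise equality to symmetry.} With $\sigma_m^+=\sigma_m^-$ in hand, the first-variation formula alone does not obviously give that $f+g$ is constant. The plan is to use convexity of $s\mapsto W_m(K_s)$ together with a second-variation or reflection argument. Set $\tilde K$ to be the reflection of $K$ in $e^\perp$, that is, the body with upper graph $-g$ and lower graph $-f$. Since $\sigma_m^+=\sigma_m^-$, the functions $f$ and $-g$ are, in the sense of the $m$-Hessian operator, solutions of the same fully nonlinear elliptic equation $\sigma_m(D^2u,Du)=h(x)$ on $D$. They also share the boundary of their graphs over $\partial D$, since both meet the vertical tangency set of $M$. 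On the set where $\sigma_m>0$ we would apply the comparison principle for $k$-convex solutions (Caffarelli--Nirenberg--Spruck) to conclude $f=-g+\text{const}$. That is precisely symmetry in a hyperplane orthogonal to $e$. Degenerate regions are the delicate point, since that is where Li--Nirenberg needed nondegeneracy. To avoid PDE there, we would instead appeal to the equality case of the Steiner-symmetrization inequality for quermassintegrals. Vanishing first variation plus convexity gives
\[
W_m(K_1)\ge W_m(K_0)+0 ,
\]
and combined with $W_m(K_1)\le W_m(K_0)$ this yields equality throughout. The rigidity of quermassintegrals under Steiner symmetrization then says $K$ is a translate of its symmetral along $e$, that is, $K$ is symmetric. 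The hardest step is this rigidity statement for general $1\le m\le n$ without regularity assumptions. We expect it to require analyzing the equality case in the Aleksandrov--Fenchel-type convexity of mixed volumes along the shadow system.
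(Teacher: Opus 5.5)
The genuine gap is the first-variation step. You assert that ``the standard first variation formula for quermassintegrals'' applies with normal speed $-c\langle\nu,e\rangle$. The classical formulas of Reilly type are for smooth one-parameter families of $\C^2$ hypersurfaces, and $K_s$ is not known to be one. The velocity $c\,e$ is $\C^2$ only over the open set $\Omega$. Up to the shadow boundary over $\partial\Omega$, the center function $c$ is in general merely continuous. When $M$ contains vertical segments, $c$ can even be discontinuous on $\partial\Omega$.

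So the identity $\frac{d}{ds}\big|_{s=0^+}W_m(K_s)=-\kappa\int_D c\,(\sigma_m^+-\sigma_m^-)\,dx$ is true, but it is exactly what has to be proved. Proving it is the main technical work of Section~\ref{sec:symmetry} and Appendix~\ref{appendix:formula}. Your derivation of $\int_D(\sigma_m^+-\sigma_m^-)\,dx=0$ via translates of $K$ also rests on this unproved formula. The paper instead gets that identity from the rigid translation $K+(t-1)e$, whose support functions are affine in $t$, so Lemma~\ref{lem:variation} applies with no regularity issue (Lemma~\ref{lem:curvature-equality}).

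To close the gap you need an argument at the level of support functions or distance functions rather than smooth hypersurfaces. The paper does this as follows. Lemma~\ref{lem:variation} needs only uniform convergence of $(h_t-h_1)/(t-1)$. For strictly convex $K$, $c$ is continuous on $\cl(\Omega)$, and Lemma~\ref{lem:supportvariation} gives $\frac{d}{dt}h_t(\nu)=c(\pi p_\nu)\langle e,\nu\rangle$ uniformly; with \eqref{eq:curvature-representation} this yields \eqref{eq:key}. For a general $\C^2$ body, one approximates in $\C^2$ by smooth bodies of positive curvature. One then uses that the normal speed $f_K=c\langle e,N\rangle$ is continuous and stable under the approximation even though $c$ is not (Lemma~\ref{lem:normal-continuity}), and that $(K_i)_t\to K_t$ (Lemma~\ref{lem:deformation-continuity}). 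Passing to the limit in $W_{m,i}(t)\geq W_{m,i}(1)+W_{m,i}'(1)(t-1)$ finishes the argument, so the derivative for $K$ itself is never needed. Appendix~\ref{appendix:formula} gives a direct alternative via $d_{K_{1+s}}=d_K-s\,f_K\circ P_K+o(|s|)$ and Steiner's polynomial.

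The rest of your outline is the paper's argument. Convexity comes from the shadow-system structure (Corollary~\ref{cor:convexity}), and the proof reduces to the equality case of Hadwiger's Theorem~\ref{thm:rigidity}. You may quote that theorem rather than treat it as ``the hardest step''; the paper reproves it in Appendix~\ref{sec:appendix}. You use Hadwiger's inequality $W_m(K_1)\le W_m(K_0)$ where the paper uses the reflection identity $W_m(-1)=W_m(1)$; either works. The PDE comparison detour should be dropped. In particular, there is no a priori reason for $f$ and $-g$ to agree up to a constant on $\partial D$, since that would already say $c$ is constant there.
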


For $\sigma_1$, the above theorem was first established by Li--Nirenberg \cite[Cor.~1~\&~2]{li-nirenberg2006b}, assuming analyticity or a nondegeneracy condition, and was also proved for more general curvature functions including Gauss--Kronecker curvature $\sigma_n>0$  \cite[Thm. 2]{li-nirenberg2006b}, under more regularity. Theorem \ref{thm:main} allows $\sigma_n$ to vanish.  Our proof is variational, in contrast with the Alexandrov reflection method  and the Hopf lemma employed by Li--Nirenberg \cites{li-nirenberg2006a,li-nirenberg2006b}. As suggested by Li \cite{li2023}, and in line with the approach of Li--Yan--Yao \cite{li-yan-yao2021} for mean curvature $\sigma_1$, the proof of Theorem \ref{thm:main} exploits the variational properties of $\sigma_m$, which we access through the theory of mixed volumes. 

More precisely, we study the variation $K_t$ of convex bodies,  produced by continuous Steiner symmetrization of $K$. This process moves the centers of the chords of $K$ parallel to $e$ into the hyperplane $e^\perp$ with constant speed, while preserving their lengths, to obtain the symmetric body $K_0$, and continues these movements until we reach $K_{-1}$, the reflection of $K$ through $e^\perp$. The first variation formula for quermassintegrals
shows that equality holds in \eqref{eq:sigma}.
Consequently, the derivative of  the quermassintegral $W_m(K_t)$ vanishes at $t=1$, which is first proved in the strictly convex case, and then by approximation in the general case; see also Appendix \ref{appendix:formula} for a direct argument. Since $W_m(K_t)$ is convex and takes equal values at $t=\pm 1$, it must then be constant. In particular, $W_m(K_0)=W_m(K)$. Thus Theorem \ref{thm:main} is reduced to the following rigidity result  for quermassintegrals \cite[Zusatz IV, p.~261]{hadwiger1957}:

\begin{theorem}[Hadwiger \cite{hadwiger1957}]\label{thm:rigidity}
Let $K\subset\R^{n+1}$ be a convex body, and $K_0$ be its Steiner 
symmetrization with respect to the hyperplane $e^\perp$, for some direction
$e\in\S^n$. Then, for $1\leq m\leq n$,
\begin{equation}\label{eq:Wm}
W_m(K_0)\leq W_m(K),
\end{equation}
with equality only if $K$ is symmetric with respect to a hyperplane 
orthogonal to $e$.
\end{theorem}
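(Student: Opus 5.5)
We will prove Theorem~\ref{thm:rigidity} using the continuous Steiner symmetrization $K_t$, $t\in[-1,1]$, already described above, together with convexity of quermassintegrals along shadow systems. We write $K=\{(x,s): x\in P,\ f(x)\leq s\leq g(x)\}$, where $P$ is the orthogonal projection of $K$ onto $e^\perp$, $f$ is convex, and $g$ is concave. Set $c=(f+g)/2$ and $\ell=(g-f)/2$, so that
\[
K_t=\{(x,s): |s-t\,c(x)|\leq \ell(x)\}.
\]
This is a shadow system in the sense of Rogers--Shephard: $K_t$ is the projection along an auxiliary direction of a fixed convex body in $\R^{n+2}$, with the auxiliary coordinate entering linearly in $t$. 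By Shephard's theorem, every mixed volume $V(K_t,\dots,K_t,B,\dots,B)$ is a convex function of $t$. Hence $\phi(t):=W_m(K_t)$ is convex on $[-1,1]$. Reflection through $e^\perp$ gives $\phi(-1)=\phi(1)$, so $\phi(0)\leq\phi(1)$, which is \eqref{eq:Wm}.

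For the equality case, suppose $\phi(0)=\phi(1)$. Convexity and symmetry then force $\phi$ to be constant on $[-1,1]$. The aim is to show that $c$ is affine, $c(x)=\langle a,x\rangle+b$; we then check that this forces symmetry. I would proceed by unfolding the convexity: $V(K_t[n+1-m],B[m])$ is a multilinear expression. For a shadow system realized as the projection of a parallel-chord movement, Shephard's argument writes $V(K_t,\dots)$ through the mixed volume of a body in one dimension higher, with an extra summand $tw$. So $\phi$ is a polynomial in $t$ of degree at most $n+1-m$, whose coefficients are mixed volumes of the lifted body $\tilde K\subset\R^{n+2}$ with the segment $[0,w]$ and $B$. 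Since a polynomial constant on an interval is constant, all the higher coefficients vanish. In particular the quadratic coefficient vanishes; it is a mixed volume containing the segment twice in suitable projections. Vanishing mixed volumes are characterized by dimension conditions (the Minkowski--Schneider criterion). I would use that characterization to conclude that the lifted body has lower-dimensional "width" in the direction coupling $t$ with $e$, that is, that the speed function $c$ restricted to $P$ is affine.

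The main obstacle is this last step: extracting from $\phi\equiv$ const that $c$ is affine. The $B$ factors make the characterization of vanishing mixed volumes less direct, because $B$ is full-dimensional. As a first alternative I would compute $\phi''(0)$ directly using the Steiner formula $W_m(K_t)\propto\int_{\S^n}S_{n-m}$-type integrals, or, more concretely, via the Cauchy--Kubota formula $W_m(K_t)=c_{n,m}\int_{G(n+1,n+1-m)}\operatorname{vol}(K_t|L)\,dL$. Projections of a shadow system along $e$ are again shadow systems, or pure translates when $e\in L^\perp$, and their volumes are constant in $t$ (by Cavalieri) when $e\in L$. So $\phi$ is constant precisely when every projected volume $t\mapsto\operatorname{vol}_{n+1-m}(K_t|L)$ is affine, hence constant by symmetry. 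For generic $L$ not containing $e$, $K_t|L$ is a moving family of sections whose volume is convex, and it is constant only if the projected speed is affine on the relevant faces. I expect that a careful analysis of the equality case for $m=n$ (mean width, where $K_t|L$ are segments) already gives $h_{K_t}(u)$ linear in $t$ for almost every $u$, which in turn forces $c$ to be affine on $P$.

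Once $c(x)=\langle a,x\rangle+b$, the map $(x,s)\mapsto(x,s-\langle a,x\rangle-b)$ is a linear shear plus a translation, and it carries $K$ onto $K_0$. This does not yet give symmetry. The boundary condition closes the gap: constancy of $\phi$ combined with the strict monotonicity of $W_m$ under a nontrivial shear of a symmetric body (its $t$-derivative at $0$ vanishes, but it is strictly convex unless $a=0$) forces $a=0$. Then $K$ is the translate of $K_0$ by $be$, so $K$ is symmetric with respect to the hyperplane $\{s=b\}$, which is orthogonal to $e$.
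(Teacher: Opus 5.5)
\textbf{The inequality is fine; the equality case has a genuine gap.} Your proof of \eqref{eq:Wm} is correct. You take convexity of $\phi(t)=W_m(K_t)$ from Shephard's theorem on shadow systems, whereas the paper derives it from Cauchy--Kubota and the fibration of $P_EK_t$ by segments, and notes that Shephard's result also gives it. Combined with $\phi(-1)=\phi(1)$, this yields \eqref{eq:Wm}.

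Your main route for the equality case assumes that $\phi$ is a polynomial in $t$ of degree at most $n+1-m$. This is false. $K_t$ is a family of linear \emph{projections} of a fixed body in $\R^{n+2}$, not a Minkowski combination $\tilde K+t[0,w]$. Such a segment representation works for the volume, which is indeed constant, but not once $B$ enters. For $m=n$ your claim together with $\phi(-1)=\phi(1)$ would make $W_n$ invariant under Steiner symmetrization. Concretely, take $n=m=1$, $e=(0,1)$, and $K$ the triangle with vertices $(0,0),(0,1),(1,1)$. Then $K_t$ is the triangle with vertices $(0,\frac{t-1}{2})$, $(0,\frac{t+1}{2})$, $(1,t)$, whose perimeter $1+\sqrt{1+(t-1)^2/4}+\sqrt{1+(t+1)^2/4}$ is not a polynomial.

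Your Cauchy--Kubota fallback also contains incorrect claims:
\begin{itemize}
\item A single $A_E(t)=|P_EK_t|$ is not even in $t$; one only has $A_E(-t)=A_{RE}(t)$, with $R$ the reflection in $e^\perp$. So affinity does not give constancy.
\item $|P_LK_t|$ is in general not constant when $e\in L\neq\R^{n+1}$. The fiber of $P_LK_t$ collects projected chords with different centers; its endpoints are the sup/inf $L^\pm_\xi$ of Lemma~\ref{lem:fibration}. For $K=[0,1]\times\{(x_2,z):|x_2|\leq1,\ \frac{x_2+1}{2}\leq z\leq\frac{x_2+3}{2}\}$ and $L=\vspan\{e_1,e\}$, one gets $|P_LK_t|=1+|t|$.
\item The decisive step, from constancy of $\phi$ to information on $c$, is only announced. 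Analyzing $m=n$ would not cover other $m$, because constancy of $W_m$ does not a priori give constancy of $W_n$.
\item Your closing claim, that $W_m$ of a nontrivial shear of a symmetric body is strictly convex, is unproved. Convexity and evenness do not exclude constancy on $[-1,1]$, and the claim is itself a special case of the rigidity you are proving.
\end{itemize}

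\textbf{What is missing.} You need a mechanism that turns constancy of $\phi$ into pointwise information on $c$. The paper does this in two steps.

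First, each convexity deficit of $A_E$ is nonnegative, continuous in $E$, and has vanishing average, so every $A_E$ is affine. By Fubini and continuity in $\xi$, every fiber length $\ell_\xi$ is then affine. Hence both $L^+_\xi$ (convex) and $L^-_\xi$ (concave) are affine.

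Second, fix $x_0\in\Omega$ and $v_0\in e^\perp$. Choose an $m$-dimensional subspace $V\subset e^\perp$ containing $v_0$, and a supergradient $a\in V$ of $v\mapsto r'(x_0;v)$ at $v_0$. Project onto the $(n+1-m)$-dimensional subspace $E=(V^\perp\cap e^\perp)\oplus\vspan\{e-a\}$. Then the upper endpoint of the fiber over $\xi_0=P_{V^\perp\cap e^\perp}x_0$ is $-\langle a,x_0\rangle$ plus $\sup\{r(x_0+v)-\langle a,v\rangle+t\,c(x_0+v): v\in V,\ x_0+v\in\cl(\Omega)\}$. At $t=0$ this supremum is attained at $v=0$. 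Affinity then forces the supremum to equal $r(x_0)+t\,c(x_0)$ for all $t$. Comparing with $v=\lambda v_0$ and letting $\lambda\to0^+$ gives $t\,c'(x_0;v_0)\leq0$ for $t=\pm1$, so $c'(x_0;v_0)=0$. Thus $c$ is constant, which gives the symmetry directly and makes any shear argument unnecessary.
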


Inequality \eqref{eq:Wm} generalizes the isoperimetric inequality $W_m(B)\leq W_m(K)$  for a ball $B$ of the same volume as $K$, which follows from the Alexandrov--Fenchel inequality for mixed volumes \cite{schneider2014}*{p.~401}; see  also \cite{milman-yehudayoff2023}*{Thm.~1.1} and the surrounding discussion. The main point of these inequalities here is the rigidity of the equality case. In this sense, the relation between Theorem \ref{thm:rigidity} and the inequality $W_m(B)\leq W_m(K)$ mirrors that between Theorem \ref{thm:main} and Alexandrov's sphere theorem.  See Milman--Yehudayoff \cite{milman-yehudayoff2023}*{Thm. 1.4} for the affine analogue of Theorem \ref{thm:rigidity}, and Chleb\'{\i}k--Cianchi--Fusco \cite{chlebik-cianchi-fuscof2005} for an extension to sets of finite perimeter when $m=1$.  The proof of Theorem \ref{thm:main} is presented in Section \ref{sec:symmetry}, after we develop the required background in Sections \ref{sec:preliminaries} and \ref{sec:steiner}. We also include a proof of Theorem \ref{thm:rigidity} in Appendix \ref{sec:appendix}.

\section{Preliminaries: Mixed Volumes}\label{sec:preliminaries}
We start by gathering the basic facts we need from the theory of mixed volumes, which may be found in the standard reference \cite{schneider2014}. A \emph{convex body} $K\subset\R^{n+1}$ is a compact convex set with nonempty interior. These bodies form a metric space with respect to Hausdorff distance.
For convex bodies $K,L$ and $\lambda\geq0$, let 
$$
K+L:=\big\{x+y\mid x\in K,\ y\in L\big\}, \qquad\text{and}\qquad \lambda K:=\big\{\lambda x\mid x\in K\big\}
$$ 
be the \emph{Minkowski addition} and \emph{scalar multiplication} respectively. Write $|\cdot|$ for volume, or Lebesgue measure, in the dimension of the given set, and  let $B:=B^{n+1}\subset\R^{n+1}$ be the unit ball. The \emph{quermassintegrals} $W_0(K),\dots,W_{n+1}(K)$ of a convex body $K$ are defined by Steiner's polynomial
\begin{equation}\label{eq:steiner}
|K+\lambda B|
=
\sum_{m=0}^{n+1}\binom{n+1}{m}W_m(K)\lambda^m.
\end{equation}
More generally, Minkowski's polynomial formula defines the \emph{mixed volumes} $V$ of a collection of convex bodies $K_1,\dots,K_k$ by
$$
\big|\lambda_1K_1+\cdots+\lambda_kK_k\big|
=
\sum_{i_1,\dots,i_{n+1}=1}^k
\lambda_{i_1}\cdots\lambda_{i_{n+1}}
V(K_{i_1},\dots,K_{i_{n+1}}).
$$
The mixed volume is symmetric, continuous under Hausdorff convergence, and linear in each argument with respect to Minkowski addition and scalar multiplication. Let $K[r]$ denote $r$ repeated arguments equal to $K$. Comparing the two polynomial formulas above gives, for $0\leq m\leq n+1$, 
\begin{equation}\label{eq:WmK}
W_m(K)=V\Big(K[\ol{m}],B[m]\Big),\qquad \ol{m}:=n+1-m.
\end{equation}

Let $h_K\colon\S^n\to\R$ be the \emph{support function} of $K$ given by $h_K(\nu):=\max_{p\in K}\langle p,\nu\rangle$. For convex bodies $K_1,\dots,K_n$, the \emph{mixed area measure} $S(K_1,\dots,K_n,\cdot)$ is the unique finite Borel measure on $\S^n$ characterized by
\begin{equation}\label{eq:mixed-area}
V(L,K_1,\dots,K_n)
=
\frac{1}{n+1}\int_{\S^n}h_L\,dS(K_1,\dots,K_n)
\end{equation}
for every convex body $L$. It is symmetric and weakly continuous in its arguments. For brevity, set 
$$
S_m(K,\cdot):=S\Big(K[n-m],\,B[m],\;\cdot\;\Big),
$$
for $0\leq m\leq n$. We need the following variation formula, which is a version of a formula of
Alexandrov \cite{schneider2014}*{Lem.~7.5.3}. 

\begin{lemma}\label{lem:variation}
Let $K_t\subset\R^{n+1}$, $0\leq t\leq 1$, be a one-parameter family of convex bodies with support functions $h_t$, and $K_1=K$. Suppose that the difference quotients $(h_t-h_1)/(t-1)$ converge uniformly on $\S^n$ as $t\to1^-$. Then, for $0\leq m\leq n$,
$$
\left.\frac{d}{dt}\right|_{t=1^-}W_m(K_t)
=
\frac{\ol{m}}{n+1}\int_{\S^n}\left.\frac{d}{dt}\right|_{t=1^-}\!h_t\;dS_m(K).
$$
\end{lemma}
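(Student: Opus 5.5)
We aim to prove the variation formula by expanding $W_m(K_t)=V(K_t[\ol m],B[m])$ via multilinearity and pairing the first-order part against the mixed area measure $S_m(K)$. The basic tool is the difference identity for mixed volumes. Writing $V(K_t[\ol m],B[m])-V(K[\ol m],B[m])$ as a telescoping sum, replacing one copy of $K$ by $K_t$ at a time, gives
$$
W_m(K_t)-W_m(K)=\sum_{j=0}^{\ol m-1}\Big[V\big(K_t[j+1],K[\ol m-1-j],B[m]\big)-V\big(K_t[j],K[\ol m-j],B[m]\big)\Big].
$$
By symmetry of $V$ and \eqref{eq:mixed-area}, each bracket is a difference of mixed volumes that differ in exactly one slot. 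So it equals
$$
\frac{1}{n+1}\int_{\S^n}(h_t-h_1)\,dS\big(K_t[j],K[\ol m-1-j],B[m]\big).
$$
This representation is legitimate even though $h_t-h_1$ is not a support function. Indeed, \eqref{eq:mixed-area} holds for each of the two bodies separately, and the remaining $n$ arguments are the same.

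Next we divide by $t-1$ and let $t\to1^-$. By hypothesis, $(h_t-h_1)/(t-1)$ converges uniformly on $\S^n$ to a continuous function $g:=\frac{d}{dt}|_{t=1^-}h_t$. Uniform convergence of the difference quotients also forces $h_t\to h_1$ uniformly, so $K_t\to K$ in the Hausdorff metric. By weak continuity of mixed area measures, $S(K_t[j],K[\ol m-1-j],B[m])$ then converges weakly to $S(K[n-m],B[m])=S_m(K)$. Note that $j+(\ol m-1-j)=n-m$.

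The limiting step uses a standard estimate. If $f_t\to g$ uniformly with $g$ continuous, and $\mu_t\to\mu$ weakly with total masses uniformly bounded, then $\int f_t\,d\mu_t\to\int g\,d\mu$. To see this, split
$$
\int f_t\,d\mu_t-\int g\,d\mu=\int(f_t-g)\,d\mu_t+\Big(\int g\,d\mu_t-\int g\,d\mu\Big).
$$
The first term is at most $\|f_t-g\|_\infty\,\mu_t(\S^n)$, and the second tends to zero by weak convergence. The total masses are bounded because $\mu_t(\S^n)=(n+1)V(B,K_t[j],\dots)$ is continuous in $t$, or simply because weak convergence gives convergence of total mass. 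Summing the $\ol m$ identical limits gives the factor $\ol m/(n+1)$ and the claimed formula.

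The only step needing care is this last limit, namely justifying that the representation via \eqref{eq:mixed-area} applies to differences and that weak convergence combined with uniform convergence suffices. Both reduce to the linearity of \eqref{eq:mixed-area} in $h_L$ and the uniform mass bound. The case $m=n+1$ is excluded, and for $m\le n$ we have $\ol m\ge1$, so no degeneracy arises.
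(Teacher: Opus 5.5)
Your proposal is correct and follows essentially the same route as the paper: telescope $V(K_t[\ol m],B[m])-V(K[\ol m],B[m])$ one slot at a time, write each difference via \eqref{eq:mixed-area} as $\frac{1}{n+1}\int(h_t-h_1)\,dS(\cdots)$, and pass to the limit using uniform convergence of the difference quotients together with weak convergence and bounded total mass of the mixed area measures. Your splitting estimate for $\int f_t\,d\mu_t\to\int g\,d\mu$ just writes out the limiting step that the paper states briefly.
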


\begin{proof}
Using \eqref{eq:WmK} and telescoping in the first $\ol{m}$ arguments,
$$
W_m(K_t)-W_m(K)
=
\sum_{i=0}^{\ol{m}-1}
\Bigl(V\Bigl(K_t[\ol{m}-i],K[i],B[m]\Bigr)-V\Bigl(K_t[\ol{m}-1-i],K[i+1],B[m]\Bigr)\Bigr).
$$
Applying \eqref{eq:mixed-area} to each mixed volume in the sum, the $i$th summand equals
$$
\frac{1}{n+1}
\int_{\S^n}(h_t-h_1)\,dS\Big(K_t[\ol{m}-1-i],K[i],B[m]\Big).
$$
Now divide by $t-1$ and let $t\to1^-$. Since $h_t\to h_1$ uniformly by hypothesis, $K_t\to K$ in the Hausdorff metric. So, by weak continuity, the mixed area measures above converge to $S(K[\ol{m}-1],B[m],\cdot)=S_m(K,\cdot)$; in particular their total masses are bounded. Furthermore, the difference quotients converge uniformly to the derivative $\frac{d}{dt}\big|_{t=1^-}h_t$, which is continuous, being a uniform limit of continuous functions. Consequently, each of the $\ol{m}$ summands, divided by $t-1$, converges to $\int_{\S^n}\frac{d}{dt}\big|_{t=1^-}h_t\,dS_m(K)/(n+1)$, which yields the desired formula.
\end{proof}

We say that a convex body $K$ is $\C^k$ if  its boundary $M$ is a $\C^k$ hypersurface.
Let $K$ be $\C^2$ with outward Gauss map $N$, and principal curvatures $\kappa_1,\dots,\kappa_n\geq0$. 
The $m$th \emph{mean curvature} of $M$, for  $1\leq m\leq n$, is given by the elementary symmetric function
$$
\sigma_m:=\sum_{1\leq i_1<\cdots<i_m\leq n}
\kappa_{i_1}\cdots\kappa_{i_m}.
$$
The curvature $\sigma_1$, or its normalization $\sigma_1/n$, is  known as the \emph{mean curvature}, and $\sigma_n$ is called the \emph{Gauss--Kronecker curvature}.
Let $dA$ denote the area element of $M$. Then
\begin{equation}\label{eq:curvature-representation}
\int_M (f\circ N)\,\sigma_m\,dA
=
\binom{n}{m}\int_{\S^n} f\,dS_m(K,\cdot),
\end{equation}
for all continuous functions $f$ on $\S^n$. In other words, $\sigma_m\,dA$ is the pullback of $\binom{n}{m}S_m(K,\cdot)$ under  $N$. Setting $f\equiv1$ and using  \eqref{eq:mixed-area} with $L=B$, we obtain
$$
\int_M\sigma_m\,dA
=
(n+1)\binom{n}{m}W_{m+1}(K).
$$
Thus, when $K$ is $\C^2$, the quermassintegral $W_{m+1}(K)$ is, up to a dimensional factor, the total $\sigma_m$-curvature of $M$.

We close with another interpretation for quermassintegrals. For a linear subspace $V\subset\R^{n+1}$, let $P_V\colon\R^{n+1}\to V$ denote the orthogonal projection onto $V$. For a nonzero vector $v\in\R^{n+1}$, we write $P_v:=P_{\vspan(v)}$. 
Let $Gr(n+1,\ol{m})$ be the Grassmannian of $\ol{m}$-dimensional linear subspaces $E\subset\R^{n+1}$, equipped with its rotation-invariant probability measure $dE$. The \emph{Cauchy--Kubota projection formula} states that
\begin{equation}\label{eq:CK}
W_m(K)
=
\frac{|B^{n+1}|}{|B^{\ol{m}}|}
\int_{Gr(n+1,\ol{m})}|P_EK|\,dE,
\end{equation}
for $0\leq m\leq n+1$. In particular,  $W_0=|K|$, and $W_{n+1}(K)=|B^{n+1}|$, which carry no variational information. For the rest of this 
work we assume that $1\leq m\leq n$.

\section{The Symmetry Variation}\label{sec:steiner}
Here we develop a variation of convex bodies via Steiner symmetrization. This device goes back
to P\'olya--Szeg\H{o} \cite[Note B]{polya-szego1951}, with 
subsequent variants \cite{rogers-shephard1958,shephard1964,brock1995},  resulting in numerous applications, see \cite[Sec.~1.1]{milman-shabelman-yehudayoff2025}, \cite[Sec. 10.4]{schneider2014},  \cite{gpsy2021,hwxy2026}. 
Let $K\subset\R^{n+1}$ be a convex body, $e\in\S^n$, and $e^\perp$ be the subspace of $\R^{n+1}$ orthogonal to $e$.  After a rotation we may assume that $e$ is the last coordinate direction,  so $e^\perp=\R^n\times\{0\}$. Let 
$$
\pi:=P_{e^\perp}\colon\R^{n+1}\to \R^n\times\{0\}
$$ 
be projection into the first $n$ coordinates. Set $\Omega:=\inte(\pi K)$, so that $\cl(\Omega)=\pi K$, where \emph{int} and \emph{cl} stand for interior and closure respectively. We denote points of $\R^{n+1}$ by $p=(x,z)\in \R^n\times \R$. Now write
$$
K=\left\{(x,z)\mid x\in\cl(\Omega),\ u^-(x)\leq z\leq u^+(x)\right\},
$$
where $u^+$ is a concave and $u^-$ is a convex function on $\cl(\Omega)$. By a \emph{chord} of $K$ (parallel to $e$) over a point $x\in\cl(\Omega)$ we mean the vertical fiber $\pi^{-1}(x)\cap K$.
The radius (or half the length) and center of these chords are given by the functions
$$
r:=(u^+-u^-)/2,\qquad\text{and}\qquad c:=(u^++u^-)/2.
$$
For $-1\leq t\leq 1$, we define the \emph{symmetry variation} of $K$ with respect to $e$ as
\begin{equation}\label{eq:Kt}
K_t:=\bigl\{p+(t-1)c(\pi p)e\mid p\in K\bigr\}.
\end{equation}
This map translates the chord of $K$ over each $x\in\cl(\Omega)$ by $(t-1)c(x)e$. Hence the chord of $K_t$ over $x$ is given by
\begin{equation}\label{eq:Kt-chords}
\pi^{-1}(x)\cap K_t=\bigl\{(x,z)\mid -r(x)+tc(x)\leq z\leq r(x)+tc(x)\bigr\},
\end{equation}
which has length $2r(x)$ and center $tc(x)$. In particular $K_1=K$, $K_{-1}$ is the reflection of $K$ in $e^\perp$, and $K_0$ is the Steiner symmetrization of $K$ with respect to $e^\perp$. Each $K_t$ is convex, since the upper endpoint of its chords, 
$r+tc=(1+t)u^+/2-(1-t)u^-/2$, is concave, while the lower 
endpoint, $-r+tc=(1+t)u^-/2-(1-t)u^+/2$, is convex.  Set
$$
W_m(t):=W_m(K_t),\qquad\text{and}\qquad A_E(t):=|P_EK_t|,
$$
where recall that $\ol{m}:=n+1-m$, and  $E\in Gr(n+1,\ol{m})$ as in the last section.
Then $W_m(1)=W_m(-1)$. The Cauchy--Kubota formula \eqref{eq:CK} may now be written as
\begin{equation}\label{eq:CK2}
W_m(t)
=
\frac{|B^{n+1}|}{|B^{\ol{m}}|}
\int_{Gr(n+1,\ol{m})}A_E(t)\,dE.
\end{equation}

To exploit this formula, we deploy the following fibration mechanism.
Let $E\in Gr(n+1,\ol{m})$ with $\ol e:=P_Ee\ne0$, and set $\ol E:=E\cap e^\perp$.
Then we have the orthogonal decomposition
$$
E=\ol E\oplus\vspan\{\ol e\}.
$$ 
Thus $P_EK_t$ may be fibered by line segments $F_\xi$ parallel to $\ol e$, over points $\xi\in P_{\ol E}\cl(\Omega)$. Let $\ell_\xi(t)$ denote the length of $F_\xi$. Then, we have 
\begin{equation}\label{eq:fubini}
A_E(t)
=
\int_{P_{\ol E}\Omega}\ell_\xi(t)\,d\xi,
\qquad\quad
\ell_\xi(t)=\Big(L^+_\xi(t)-L^-_\xi(t)\Big)|\ol e|,
\end{equation}
where the first equality is by Fubini's theorem, and
$L^{\pm}_\xi(t)$ are the upper and lower $\ol e$-coordinates of $F_\xi$ in $E$. 

\begin{lemma}\label{lem:fibration}
Let $E\in Gr(n+1,\ol{m})$ with $\ol e:=P_Ee\ne0$. Then
$$
L^+_\xi(t)
=
\sup_{\substack{x\in\cl(\Omega)\\P_{\ol E}x=\xi}}
\left\{
\frac{\langle x,\ol e\rangle}{|\ol e|^2}+r(x)+tc(x)
\right\},
\quad
L^-_\xi(t)
=
\inf_{\substack{x\in\cl(\Omega)\\P_{\ol E}x=\xi}}
\left\{
\frac{\langle x,\ol e\rangle}{|\ol e|^2}-r(x)+tc(x)
\right\}.
$$
In particular, $L^+_\xi$ is convex and $L^-_\xi$ is concave in $t$, and, for each fixed $t$, the function $\xi\mapsto\ell_\xi(t)$ is continuous on $\inte(P_{\ol E}\Omega)$. 
\end{lemma}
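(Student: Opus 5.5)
We start by computing $P_E$ on a point $p=(x,z)$. Since $E=\ol E\oplus\vspan\{\ol e\}$ orthogonally, with $\ol E\subset e^\perp$, write
$$
P_Ep=P_{\ol E}p+\frac{\langle p,\ol e\rangle}{|\ol e|^2}\,\ol e .
$$
Two identities reduce this to the stated form. First, $P_{\ol E}p=P_{\ol E}x$, because $\ol E\perp e$. Second, $\langle p,\ol e\rangle=\langle x,\ol e\rangle+z\langle e,\ol e\rangle$, and $\langle e,\ol e\rangle=|\ol e|^2$ since $\ol e=P_Ee$. So the $\ol e$-coordinate of $P_Ep$ is $\langle x,\ol e\rangle/|\ol e|^2+z$, where we view $x\in e^\perp$ inside $\R^{n+1}$.

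Hence the fiber $F_\xi$ of $P_EK_t$ over $\xi$ is the set of values $\langle x,\ol e\rangle/|\ol e|^2+z$ with $(x,z)\in K_t$ and $P_{\ol E}x=\xi$. By \eqref{eq:Kt-chords}, $z$ ranges over $[-r(x)+tc(x),\,r(x)+tc(x)]$. Taking the supremum and infimum over $x$ and $z$ gives the formulas for $L^\pm_\xi(t)$. These are in fact max and min, since $\cl(\Omega)\cap P_{\ol E}^{-1}(\xi)$ is compact and $r\pm tc$ is upper/lower semicontinuous. We also note that $F_\xi$ really is a segment, since $P_EK_t$ is convex.

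Convexity in $t$ is then immediate. For each fixed $x$, the expression inside the supremum is affine in $t$, and a supremum of affine functions is convex. Likewise an infimum of affine functions is concave, so $L^-_\xi$ is concave.

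For continuity of $\xi\mapsto\ell_\xi(t)$ on $\inte(P_{\ol E}\Omega)$, use the convexity of $K_t$ shown above. Because $P_EK_t$ is a convex body in $E$, the function $\xi\mapsto L^+_\xi(t)$ is concave on $P_{\ol E}\cl(\Omega)$, being the upper boundary function of a convex set over its projection; similarly $L^-_\xi(t)$ is convex. Concave and convex functions are finite, hence continuous, on the interior of their domain, and $\inte(P_{\ol E}\Omega)$ is contained in the interior of the domain $P_{\ol E}\cl(\Omega)$. Therefore $\ell_\xi(t)=(L^+_\xi-L^-_\xi)|\ol e|$ is continuous there.

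The main obstacle is bookkeeping rather than depth. One must check the identity $\langle e,\ol e\rangle=|\ol e|^2$ and make sure the fiber is parametrized by $\ol e/|\ol e|^2$ in a way consistent with $\ell_\xi=(L^+-L^-)|\ol e|$. One must also confirm that the projection $P_{\ol E}P_EK_t$ equals $P_{\ol E}\cl(\Omega)$ for every $t$, which holds because $\pi K_t=\pi K$.
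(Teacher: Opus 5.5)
Your proof is correct and follows essentially the same route as the paper: you compute $P_E(x+ze)$ using $\langle e,\ol e\rangle=|\ol e|^2$, read off $L^\pm_\xi(t)$ from the chords \eqref{eq:Kt-chords}, get convexity and concavity in $t$ because $L^\pm_\xi$ are a supremum and an infimum of affine functions, and get continuity in $\xi$ because $L^\pm_\xi(t)$ are finite concave and convex functions, which you justify via convexity of $P_EK_t$. Your extra remarks on attainment of the extrema and on $P_{\ol E}P_EK_t=P_{\ol E}\cl(\Omega)$ are correct but not needed.
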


\begin{proof}
For $x\in\cl(\Omega)$, set $\xi_x:=P_{\ol E}x$. Since $\langle e,\ol e\rangle=\langle P_Ee,P_Ee\rangle=|\ol e|^2$, we have $P_{\ol e}e=\ol e$. Then
$$
P_E(x+ze)
=
P_{\ol E}(x+ze)+P_{\ol e}(x+ze)
=
P_{\ol E}x+P_{\ol e}x+z\ol e
=
\xi_x+
\left(
\frac{\langle x,\ol e\rangle}{|\ol e|^2}+z
\right)\ol e.
$$
Thus $P_E$ maps the chord of $K_t$ over each $x$ into the fiber of $P_EK_t$ over $\xi_x$. Hence \eqref{eq:Kt-chords} yields the expressions for $L^\pm_\xi(t)$. In particular, $L^+_\xi$ is convex in $t$, as a supremum of affine functions, while $L^-_\xi$ is concave, as an infimum of affine functions. Furthermore, for each fixed $t$, $L^+_\xi(t)$ and $L^-_\xi(t)$ are finite concave and convex functions of $\xi$, respectively, on $P_{\ol E}\Omega$, and hence are continuous in its interior; so $\xi\mapsto\ell_\xi(t)$ is continuous there. 
\end{proof}

\begin{corollary}\label{cor:convexity}
For every $E\in Gr(n+1,\ol{m})$, the functions $A_E(t)$ and $W_m(t)$ are convex.
\end{corollary}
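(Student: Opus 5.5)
The plan is to prove convexity of $A_E$ for each fixed $E$ first, and then obtain convexity of $W_m(t)$ by integrating over the Grassmannian via \eqref{eq:CK2}. A nonnegative integral of convex functions is convex, and the only subtlety there is measurability, which is harmless since $A_E(t)$ depends continuously on $E$ for each fixed $t$.

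Fix $E\in Gr(n+1,\ol m)$. We split into two cases according to whether $\ol e:=P_Ee$ vanishes.

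\emph{Case $\ol e=0$.} Then $e\perp E$, so $P_E$ annihilates the $e$-direction. Since $K_t$ is obtained from $K$ by translating chords parallel to $e$, we get $P_EK_t=P_EK$. Hence $A_E(t)$ is constant, and in particular convex.

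\emph{Case $\ol e\neq0$.} We use the fibration \eqref{eq:fubini}. By Lemma~\ref{lem:fibration}, $L^+_\xi$ is convex and $L^-_\xi$ is concave in $t$, so
$$\ell_\xi(t)=\bigl(L^+_\xi(t)-L^-_\xi(t)\bigr)|\ol e|$$
is convex in $t$ for each fixed $\xi$. Each $\ell_\xi$ is also finite and nonnegative, because $K_t$ is compact. Integrating over $\xi\in P_{\ol E}\Omega$, the inequality
$$\ell_\xi\bigl((1-s)t_0+st_1\bigr)\le(1-s)\ell_\xi(t_0)+s\ell_\xi(t_1)$$
integrates to the same inequality for $A_E$. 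Two points need checking for this step. First, the domain of integration $P_{\ol E}\Omega$ does not depend on $t$; this holds because $\pi K_t=\pi K$, as the chords move only vertically. Second, $\xi\mapsto\ell_\xi(t)$ must be measurable, which follows from its continuity on $\inte(P_{\ol E}\Omega)$ given by Lemma~\ref{lem:fibration}. The boundary of the convex set $P_{\ol E}\Omega$ inside $\ol E$ has measure zero, so it can be ignored.

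The main obstacle, mild as it is, is justifying that the Fubini representation \eqref{eq:fubini} holds with a $t$-independent base. The fibers over $\xi$ are exactly the segments computed in Lemma~\ref{lem:fibration}, and their union is $P_EK_t$. Once this is in place, convexity of $W_m(t)$ follows immediately from \eqref{eq:CK2}, since $W_m(t)$ is a positive multiple of the integral over $Gr(n+1,\ol m)$ of the convex functions $A_E(t)$.
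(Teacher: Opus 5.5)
Your proposal is correct and follows the paper's proof essentially step for step. When $P_Ee=0$, $A_E$ is constant. Otherwise, convexity of each $\ell_\xi$ from Lemma~\ref{lem:fibration} integrates via \eqref{eq:fubini} to convexity of $A_E$, and averaging over the Grassmannian via \eqref{eq:CK2} gives convexity of $W_m$. Your extra remarks on measurability and on the $t$-independence of the base $P_{\ol E}\Omega$ are sound details that the paper leaves implicit.
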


\begin{proof}
By \eqref{eq:CK2} it suffices to show that $A_E(t)$ is convex, since averages of convex functions are convex. If $P_Ee=0$, then $A_E(t)=A_E(0)$ for every $t$, and we are done. Otherwise, by Lemma~\ref{lem:fibration}, each $\ell_\xi$ is convex in $t$, being the difference of a convex and a concave function, up to a positive factor; hence so is $A_E(t)$ by \eqref{eq:fubini}.
\end{proof}

The family $K_t$ may also be viewed as a \emph{shadow system} in the sense of Shephard \cite{shephard1964}, since it arises from the projections of a convex body in $\R^{n+2}$, as may be seen from the identity $K_t=T_t(\Delta(K))$ in the proof of Lemma~\ref{lem:deformation-continuity} below. So the convexity of $W_m(t)$ also follows from the convexity of mixed volumes of shadow systems established in \cite{shephard1964}.

\section{Proof of Theorem \ref{thm:main}}\label{sec:symmetry}
Here we complete the proof of Theorem~\ref{thm:main} in three stages as follows. Let $K$ be the convex body bounded by $M$, let $e$ be the last coordinate direction, let $\Omega$, $u^{\pm}$, $r$, $c$, and $K_t$ be as in Section~\ref{sec:steiner},  let $\sigma_m^{\pm}$ denote the $\sigma_m$-curvatures of the graphs of $u^{\pm}$ over $\Omega$, and let $N$ be the outward Gauss map of $M$. 

\subsection{Reduction to equality}
The first variation formula established for quermassintegrals in Lemma \ref{lem:variation}  quickly yields that equality holds in \eqref{eq:sigma}:

\begin{lemma}\label{lem:curvature-equality}
$\sigma_m^+=\sigma_m^-.$
\end{lemma}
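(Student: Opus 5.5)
Hypothesis \eqref{eq:sigma} gives $\sigma_m^+(x)\leq\sigma_m^-(x)$ for every $x\in\Omega$: the points $p=(x,u^+(x))$ and $q=(x,u^-(x))$ satisfy $p-q=2r(x)e$, and the open segment between them lies in $\inte K$. So it suffices to prove
$$
\int_\Omega \bigl(\sigma_m^--\sigma_m^+\bigr)\,\phi\,dx\leq 0
$$
for some positive weight $\phi$. I will get this from the first variation of $W_{m+1}$, or of a related quermassintegral, along the symmetry variation at $t=1$, combined with the fact that $W(t)$ is convex with $W(1)=W(-1)$. Convexity gives $W(t)\leq W(1)$ on $[-1,1]$, hence $\frac{d}{dt}\big|_{t=1^-}W(t)\geq 0$. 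The plan is to show that this one-sided derivative equals $C\int_\Omega(\sigma_m^+-\sigma_m^-)\,dx$, with $C>0$, up to an index shift. That integral is $\leq 0$, so it must vanish. Together with the pointwise inequality and continuity of $\sigma_m^\pm$ on $\Omega$, this forces $\sigma_m^+=\sigma_m^-$.

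\textbf{Computing the derivative.} I will apply Lemma~\ref{lem:variation} with index $m+1$, replaced by $m$ if that is what matches the measure $S_m$; by \eqref{eq:curvature-representation}, the measure that turns into $\sigma_m\,dA$ is $S_m(K,\cdot)$, i.e.\ the variation of $W_{m+1}$ in the paper's indexing. Since $m+1\leq n+1$ and convexity holds for all indices (Corollary~\ref{cor:convexity}), this is fine; the case $m=n$ gives the index $n+1$, where $\ol{m}=0$, and may need separate handling via $W_n$ and the Gauss--Kronecker measure. Next, compute $\dot h:=\frac{d}{dt}\big|_{t=1^-}h_t$. For a unit normal $\nu$ with maximizer $p=(x,u^\pm(x))\in M$, the support point moves with velocity $c(x)e$, so formally $\dot h(\nu)=c(x)\langle e,\nu\rangle$. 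Then \eqref{eq:curvature-representation} converts $\int\dot h\,dS_m$ into $\int_M c(\pi p)\langle e,N\rangle\sigma_m\,dA$. On the upper graph, $\langle e,N\rangle\,dA=dx$, and on the lower graph it is $-dx$. The vertical part of $M$ over $\partial\Omega$ contributes nothing, since there $\langle e,N\rangle=0$. This yields $\int_\Omega c\,(\sigma_m^+-\sigma_m^-)\,dx$. The weight $c$ has no sign, so I would first translate $K$ along $e$ so that $c>0$ on $\cl(\Omega)$. Translation does not change $r$, and replacing $c$ by $c+a$ only translates every $K_t$ by $(t-1)ae$, which leaves $W$ unchanged. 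Hence the derivative is also given by the same integral with $c+a$ in place of $c$. Since this holds for every $a$, the quantity must be independent of $a$, and we get $\int_\Omega(\sigma_m^+-\sigma_m^-)\,dx=0$ directly, which combined with the pointwise inequality finishes the proof.

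\textbf{Main obstacle.} The hard part is justifying the hypotheses of Lemma~\ref{lem:variation}: the difference quotients of $h_t$ must converge uniformly, and the limit must be $c(x)\langle e,\nu\rangle$. When $K$ is not strictly convex, the maximizer of $\langle\cdot,\nu\rangle$ is a face, and the one-sided derivative becomes $\max$ over the face of $c\,\langle e,\nu\rangle$. That is fine, since the faces form a set of $S_m$-measure issue only where $\sigma_m$ concentrates. However, uniform convergence near $\partial\Omega$, where $c$ may fail to be smooth, is delicate. I would first treat $C^2$ bodies with positive curvature, where $h$ is $C^2$ and $h_t$ can be computed from the graphs explicitly. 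The general $C^2$ convex case would then follow by approximation, passing to the limit in the identity $\frac{d}{dt}W=\int c(\sigma_m^+-\sigma_m^-)\,dx$ using weak continuity and the convexity of $W(t)$ (derivatives of convex functions behave upper-semicontinuously under uniform limits). This is the step I expect to require the most care.
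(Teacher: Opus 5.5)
You have the right idea in the $c\mapsto c+a$ trick, but the proof as written rests on a step you leave open, and you can drop that step entirely. The load-bearing claim is the identity $W'(1^-)=C\int_\Omega c\,(\sigma_m^+-\sigma_m^-)\,dx$ for the symmetry variation of an \emph{arbitrary} $\C^2$ convex body, which you flag as the main obstacle, and your sketch does not establish it.

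\begin{itemize}
\item \emph{Nontrivial faces.} If $K$ has a nontrivial face $F_\nu$, the left derivative of $h_t(\nu)$ at $t=1$ is $\min_{p\in F_\nu}c(\pi p)\langle e,\nu\rangle$. It is a minimum, not a maximum, because $t-1<0$. This function is in general discontinuous in $\nu$, so the difference quotients cannot converge uniformly, and Lemma~\ref{lem:variation} does not apply.
\item \emph{Approximation.} Weak convergence of the measures does not let you pass to the limit in $\int_{M_i}f_{K_i}\,\sigma_{m,i}\,dA_i$ unless $f_{K_i}\to f_K$ uniformly. That is delicate because $c$ can jump on $\partial\Omega$; it is exactly what Lemma~\ref{lem:normal-continuity} handles in the paper.
\item \emph{Convexity in the limit.} Convexity of the $W_i$ yields only the one-sided bound $W'(1^-)\leq\liminf_i W_i'(1^-)$, not the identity you propose to pass to the limit in.
\end{itemize}
In the paper this identity is the heart of the main theorem (Proposition~\ref{prop:strict} with the approximation lemmas, or Appendix~\ref{appendix:formula}). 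The present lemma does not need it.

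Your $c\mapsto c+a$ trick shows why: the difference between the two formulas is just the first variation along a translation. So apply Lemma~\ref{lem:variation} to the translation family directly, which is the paper's proof:
\begin{itemize}
\item For $L_t:=K+(t-1)e$, $0\leq t\leq 1$, the support function is $h_K+(t-1)\langle e,\nu\rangle$. The difference quotients are identically $\langle e,\nu\rangle$, so the lemma's hypothesis holds trivially, and $W_m(L_t)$ is constant.
\item Hence $\int_{\S^n}\langle e,\nu\rangle\,dS_m(K,\nu)=0$.
\item By \eqref{eq:curvature-representation}, using $\langle e,N\rangle\,dA=\pm dx$ over the graphs of $u^\pm$ and $\langle e,N\rangle=0$ on the rest of $M$, this gives $\int_\Omega(\sigma_m^+-\sigma_m^-)\,dx=0$.
\item Your pointwise inequality and continuity then finish the proof.
\end{itemize}
This uses no strict convexity, approximation, symmetry variation, or convexity of $W(t)$.

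Your index worry is also unfounded. Lemma~\ref{lem:variation} for $W_m$ already produces $S_m$, since differentiating $V(K[\ol{m}],B[m])$ in $K$ leaves $K[n-m],B[m]$. It is only the total mass of $S_m$ that is proportional to $W_{m+1}$. So the index is $m$, with $\ol{m}\geq 1$, and $m=n$ needs no separate treatment.
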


\begin{proof}
Set
$
L_t:=K+(t-1)e
$
for $0\leq t\leq1$. If $h_t$ is the support function of $L_t$, then
$
h_t(\nu)=h_K(\nu)+(t-1)\langle e,\nu\rangle.
$
Since $L_t$ is a translate of $K$, $W_m(L_t)$ is constant. Thus
Lemma~\ref{lem:variation} and \eqref{eq:curvature-representation} give
$$
0
=
\frac{d}{dt}W_m(L_t)
=
\frac{\ol{m}}{(n+1)\binom{n}{m}}
\int_M\langle e,N\rangle\,\sigma_m\,dA.
$$
Over the graphs of $u^\pm$ we have
$dx=\pm\langle e,N\rangle\,dA$, while $\langle e,N\rangle=0$ on the
remainder of $M$. Consequently,
$
0
=
\int_\Omega\bigl(\sigma_m^+-\sigma_m^-\bigr)\,dx.
$
For each $x\in\Omega$, the points $p:=(x,u^+(x))$ and $q:=(x,u^-(x))$
satisfy the hypotheses of \eqref{eq:sigma}. Hence
$\sigma_m^+-\sigma_m^-\leq 0$ on $\Omega$. Since
$\sigma_m^+-\sigma_m^-$ is continuous, it
vanishes identically. 
\end{proof}

The identity $\int_M\langle e,N\rangle\,\sigma_m\,dA=0$  above
also follows from the fact that mixed area measures have centroid at the origin, cf. \cite{schneider2014}*{(5.30)}. Moreover, since convex hypersurfaces satisfy Condition~S' of \cite{li-yan-yao2021}, 
the above lemma  follows from
\cite[Prop. 3]{li-yan-yao2021} when $m=1$, and \cite[Lem. 1]{li2023} when $2\leq m\leq n$.

\subsection{The strictly convex case}

The next lemma computes the derivative of the support functions of the symmetry variation $K_t$ at $t=1$ when the original body $K=K_1$ is
\emph{strictly convex}, meaning that its boundary $M$ contains no nontrivial line segment.

\begin{lemma}\label{lem:supportvariation}
Suppose that $K$ is strictly convex. Let $h_t$ be the support function of $K_t$. For each $\nu\in\S^n$, let $p_\nu$ be the unique point of $K$ with outward support normal $\nu$. Then
$$
\left.\frac{d}{dt}\right|_{t=1^-}h_t(\nu)
=
c(\pi p_\nu)\langle e,\nu\rangle.
$$
Furthermore, the corresponding difference quotients $(h_t(\nu)-h_1(\nu))/(t-1)$ converge uniformly in $\nu$ as $t\to1^-$.
\end{lemma}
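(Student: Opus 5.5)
Since $K_t$ is the image of $K$ under the map $\Phi_t(p):=p+(t-1)c(\pi p)e$, the support function can be written as
$$
h_t(\nu)=\max_{p\in K}\bigl\{\langle p,\nu\rangle+(t-1)c(\pi p)\langle e,\nu\rangle\bigr\}.
$$
I will compute its one-sided derivative at $t=1$ with a Danskin-type envelope argument. For $t<1$ pick a maximizer $p_t\in K$. Write $g(p):=c(\pi p)\langle e,\nu\rangle$, and note that $c$ is bounded on $\cl(\Omega)$ because $u^\pm$ are. Comparing with the competitor $p_\nu$ gives
$$
h_t(\nu)-h_1(\nu)\geq (t-1)g(p_\nu).
$$
For the reverse bound, use $\langle p_t,\nu\rangle\le h_1(\nu)$ to get
$$
h_t(\nu)-h_1(\nu)\leq (t-1)g(p_t).
$$
Since $t-1<0$, the difference quotient $Q_t(\nu):=(h_t(\nu)-h_1(\nu))/(t-1)$ satisfies
$$
g(p_t)\le Q_t(\nu)\le g(p_\nu).
$$
So the whole matter reduces to showing $g(p_t)\to g(p_\nu)$ uniformly in $\nu$.

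Next I show that $p_t\to p_\nu$ uniformly. From the maximality of $p_t$,
$$
\langle p_t,\nu\rangle\ge h_1(\nu)-2(1-t)\|c\|_\infty,
$$
so $p_t$ lies in the cap $\{p\in K:\langle p,\nu\rangle\ge h_1(\nu)-\delta\}$ with $\delta=2(1-t)\|c\|_\infty$. Strict convexity makes these caps shrink to $p_\nu$ uniformly in $\nu$. I will argue this by compactness: otherwise there would be $\nu_k$ and $q_k$ in caps of depth $\delta_k\to0$ with $|q_k-p_{\nu_k}|\ge\epsilon$. Passing to subsequences gives a limit normal $\nu$, with both limit points lying in the face of $K$ with normal $\nu$, which is a single point. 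This is a contradiction. The same argument shows that $\nu\mapsto p_\nu$ is continuous.

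The main obstacle is continuity of $c\circ\pi$ on all of $K$. Up to the boundary of $\Omega$, $u^+$ is concave and $u^-$ convex, so they are only upper and lower semicontinuous on $\cl(\Omega)$, and $c$ could jump at $\partial\Omega$. Here I use that $M$ is $\C^2$, in particular $\C^1$, together with strict convexity. The part of $M$ over $\partial\Omega$ consists of vertical chords, and by strict convexity these are single points, so $u^+=u^-$ on $\partial\Omega$. Hence $K$ is the region between the graphs of $u^-\le u^+$, which meet along $\partial\Omega$. The upper boundary $\{z=u^+\}$ is then a closed subset of the boundary of the compact set $K$: if $(x_k,u^+(x_k))\to(x,z)$ with $z<u^+(x)$, then a neighborhood argument on the boundary contradicts convexity. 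More simply, $u^+$ is concave and upper semicontinuous, and its lower semicontinuity at $\partial\Omega$ follows from $u^+\ge u^-$ with $u^-$ lower semicontinuous and $u^+=u^-$ there. So $\liminf u^+\ge\liminf u^-\ge u^-(x)=u^+(x)$, and symmetrically for $u^-$. Thus $u^\pm$ are continuous on $\cl(\Omega)$, and therefore $c$ is uniformly continuous on the compact set $\cl(\Omega)$.

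With this in hand, $g(p_t)\to g(p_\nu)$ uniformly in $\nu$ by uniform continuity of $c\circ\pi$ and uniform convergence $p_t\to p_\nu$. This gives both the derivative formula $c(\pi p_\nu)\langle e,\nu\rangle$ and the uniform convergence of the difference quotients, as required for Lemma~\ref{lem:variation}.
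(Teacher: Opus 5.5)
Your proof is correct and follows the paper's argument. It uses the same max-formula for $h_t$ and the same sandwich $c(\pi p_t)\langle e,\nu\rangle\le Q_t(\nu)\le c(\pi p_\nu)\langle e,\nu\rangle$. Your explicit cap-depth bound simply makes quantitative the paper's step that limit points of maximizers are support points with normal $\nu$. Continuity of $c$ on $\cl(\Omega)$ comes, as in the paper, from strict convexity collapsing the chords over $\partial\Omega$ to single points.

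Two small remarks. The upper and lower semicontinuity of $u^\pm$ on $\cl(\Omega)$ comes from the closedness of $K$, not from concavity or convexity. Also, the $\C^1$ regularity of $M$ plays no role in this lemma.
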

\begin{proof}
Strict convexity implies that the chord of $K$ over every point of $\partial\Omega$ is a single point, since it lies in a vertical supporting hyperplane of $K$. Let $x_i\in\Omega$ tend to $x\in\partial\Omega$. By compactness, every limit point of $(x_i,u^+(x_i))$ lies in the chord of $K$ over $x$, and hence equals its unique point. Thus $u^+(x_i)$ converges to the height of that point. The same argument applies to $u^-$, so $u^{\pm}$, and hence $c$, are continuous on $\cl(\Omega)$. By \eqref{eq:Kt},
$$
h_t(\nu)
=
\max_{p\in K}
\bigl\{\langle p,\nu\rangle+(t-1)c(\pi p)\langle e,\nu\rangle\bigr\}.
$$
Let $p_{t,\nu}$ be a maximizing point, and note 
that $h_1(\nu)=\langle p_\nu,\nu\rangle$. Taking $p=p_\nu$ in the maximum, and using the fact that  $\langle p_{t,\nu},\nu\rangle\leq h_1(\nu)$, since $p_{t,\nu}\in K$, yields
$$
h_t(\nu)\geq h_1(\nu)+(t-1)\,c(\pi p_\nu)\langle e,\nu\rangle,
\qquad
h_t(\nu)
\leq 
h_1(\nu)+(t-1)\,c(\pi p_{t,\nu})\langle e,\nu\rangle.
$$
Dividing these inequalities by $t-1<0$, we obtain
$$
c(\pi p_{t,\nu})\langle e,\nu\rangle
\leq
\frac{h_t(\nu)-h_1(\nu)}{t-1}
\leq
c(\pi p_\nu)\langle e,\nu\rangle.
$$
As $t\to1^-$, every limit point of $p_{t,\nu}$ is a support point of $K$ with normal $\nu$, and hence equals $p_\nu$. This convergence is uniform in $\nu$: otherwise a convergent sequence of normals and compactness of $K$ would produce two distinct support points for the limiting normal. Continuity of $c$ now gives the result.
\end{proof}

The next result covers in particular the case $\sigma_n>0$, established earlier by Li--Nirenberg \cite[Thm. 2]{li-nirenberg2006b} under stronger regularity assumptions. The following formula is an analogue, for the symmetry variation $K_t$, of the 
classical first variation formulas for curvature integrals of smooth 
families of hypersurfaces, e.g., see Reilly \cite{reilly1973}; however, $K_t$ is not a smooth variation of $K$, since $c$ 
is in general only continuous. Let $W'_m(1)$ denote the left derivative of $W_m$ at $t=1$. 

\begin{proposition}\label{prop:strict}
If $M$ is strictly convex, then
\begin{equation}\label{eq:key}
W_m'(1)
=
\frac{\ol{m}}{(n+1)\binom{n}{m}}\int_\Omega c(x)
\bigl(\sigma_m^+(x)-\sigma_m^-(x)\bigr)\,dx.
\end{equation}
Consequently, Theorem~\ref{thm:main} holds in the strictly convex case.
\end{proposition}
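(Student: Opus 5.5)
Plan: combine Lemma~\ref{lem:variation} with Lemma~\ref{lem:supportvariation}, then pull back to $M$ via \eqref{eq:curvature-representation}.

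\textbf{Step 1: derivative formula.} By Lemma~\ref{lem:supportvariation}, the difference quotients of $h_t$ converge uniformly, with limit $c(\pi p_\nu)\langle e,\nu\rangle$. So Lemma~\ref{lem:variation} applies and gives
$$
W_m'(1)=\frac{\ol{m}}{n+1}\int_{\S^n}c(\pi p_\nu)\langle e,\nu\rangle\,dS_m(K,\nu).
$$
The integrand is $f(\nu):=c(\pi p_\nu)\langle e,\nu\rangle$. Since $K$ is strictly convex, $\nu\mapsto p_\nu$ is continuous, and $c$ is continuous on $\cl(\Omega)$ by the proof of Lemma~\ref{lem:supportvariation}; hence $f$ is continuous. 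Moreover $p_{N(p)}=p$ for $p\in M$. Applying \eqref{eq:curvature-representation} therefore turns this into
$$
\frac{\ol{m}}{(n+1)\binom nm}\int_M c(\pi p)\langle e,N(p)\rangle\,\sigma_m\,dA.
$$

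\textbf{Step 2: reduction to $\Omega$.} We split $M$ exactly as in Lemma~\ref{lem:curvature-equality}. On the graphs of $u^\pm$ over $\Omega$ we have $\pm\langle e,N\rangle\,dA=dx$. The rest of $M$ lies over $\partial\Omega$, where $\langle e,N\rangle=0$, so it contributes nothing. This yields \eqref{eq:key}.

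\textbf{Step 3: conclusion of the strictly convex case.} By Lemma~\ref{lem:curvature-equality}, $\sigma_m^+=\sigma_m^-$, so \eqref{eq:key} gives $W_m'(1)=0$. By Corollary~\ref{cor:convexity}, $W_m$ is convex on $[-1,1]$. Its left derivative at $1$ vanishes, so $1$ is a minimum point, and convexity gives $W_m(t)\ge W_m(1)$ for all $t$. Write $a:=W_m(1)=W_m(-1)$, the equality holding because $K_{-1}$ is a reflection of $K$. Convexity also gives $W_m(t)\le a$ on $[-1,1]$, since a convex function lies below the chord joining $(-1,a)$ and $(1,a)$. Hence $W_m$ is constant, and in particular $W_m(K_0)=W_m(K)$. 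The equality case of Theorem~\ref{thm:rigidity} now shows that $K$, and hence $M$, is symmetric with respect to a hyperplane orthogonal to $e$.

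\textbf{Main obstacle.} The delicate step is the pullback in Step 1. One must check that \eqref{eq:curvature-representation} applies to $f$, which requires continuity of $f$ on all of $\S^n$, including near horizontal normals where $\pi p_\nu\in\partial\Omega$. Strict convexity supplies exactly this, through continuity of $c$ up to $\partial\Omega$ and continuity of $\nu\mapsto p_\nu$. There is also a measure-zero subtlety in Step 2: the set of $M$ lying over $\partial\Omega$ must contribute zero. This holds because $\langle e,N\rangle$ vanishes there, regardless of the $\sigma_m\,dA$ mass on that set.
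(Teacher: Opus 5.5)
Your proposal is correct and follows the paper's proof essentially step for step. Lemmas~\ref{lem:variation} and~\ref{lem:supportvariation} give $W_m'(1)$ as an integral against $S_m(K,\cdot)$, \eqref{eq:curvature-representation} pulls it back to $M$, the graphs of $u^\pm$ convert it to an integral over $\Omega$, and Lemma~\ref{lem:curvature-equality} together with convexity and $W_m(-1)=W_m(1)$ feeds into Theorem~\ref{thm:rigidity}; your remarks on the continuity of $f$ (via continuity of $c$ on $\cl(\Omega)$ and of $\nu\mapsto p_\nu$) and on the left-derivative argument for constancy only make explicit what the paper leaves implicit.
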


\begin{proof}
 By Lemmas~\ref{lem:variation} and \ref{lem:supportvariation},
$$
W_m'(1)
=
\frac{\ol{m}}{n+1}\int_{\S^n}c(\pi p_\nu)\langle e,\nu\rangle\,dS_m(K,\nu).
$$
We change variables twice. First, applying \eqref{eq:curvature-representation} with $f(\nu):=c(\pi p_\nu)\langle e,\nu\rangle$ yields
$$
\int_{\S^n}c(\pi p_\nu)\langle e,\nu\rangle\,dS_m(K,\nu)
=\frac{1}{\binom{n}{m}}
\int_M c(\pi p)\langle e,N(p)\rangle\,\sigma_m(p)\,dA(p),
$$
since $p_{N(p)}=p$. Next, over the graphs of $u^{\pm}$ we have $dx=\pm\langle e,N\rangle\,dA$, while $\langle e,N\rangle=0$ on the rest of $M$; hence, 
$$
\int_M c(\pi p)\langle e,N(p)\rangle\,\sigma_m(p)\,dA(p)
=
\int_\Omega c(x)
\bigl(\sigma_m^+(x)-\sigma_m^-(x)\bigr)\,dx,
$$
which yields the stated formula.
By
Lemma \ref{lem:curvature-equality},  $\sigma_m^+=\sigma_m^-$. So $W_m'(1)=0$.
By Corollary~\ref{cor:convexity}, $W_m(t)$ is convex, and we also have $W_m(-1)=W_m(1)$. Hence $W_m(t)$ is constant, and Theorem~\ref{thm:rigidity} yields the desired symmetry.
\end{proof}

Equation \eqref{eq:key} holds without the strict convexity assumption (see Appendix \ref{appendix:formula}), which completes the proof of Theorem \ref{thm:main} as discussed above. Here we give an alternative argument by approximation, which is conceptually simpler.

\subsection{The general case}
We need the following three lemmas. Given a convex body $K\subset\R^{n+1}$ with $0\in\inte K$, its boundary $M$ is the 
graph of the \emph{radial function} $\rho\colon\S^n\to(0,\infty)$ given 
by $\rho(u):=\max\{r>0\mid ru\in K\}$. For convex bodies $K_i$, $K$ containing $0$ in their 
interiors, with radial functions $\rho_i$, $\rho$, we say that $M_i\to M$ 
in the \emph{$\C^k$ topology} provided that $\rho_i\to\rho$ in 
$\C^k(\S^n)$.

\begin{lemma}\label{lem:approximation}
Let $K$ be a $\C^2$ convex body with  boundary $M$. Then there exist $\C^\infty$ convex bodies $K_i$ with  boundaries $M_i$ of positive principal curvatures, such that $M_i\to M$ in the $\C^2$ topology.
\end{lemma}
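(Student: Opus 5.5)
Translate $K$ so that $0\in\inte K$, and write $M$ as the graph of its radial function $\rho\in\C^2(\S^n)$. The approximation proceeds in two steps. First smooth $\rho$ while keeping convexity; then add a small multiple of a uniformly convex body to make the curvature positive. Keeping convexity is the delicate part, because a naive mollification of $\rho$ need not preserve it. We therefore smooth a convex function instead. Let $g:=\|\cdot\|_K$ be the gauge function of $K$, that is, $g(x)=|x|/\rho(x/|x|)$. It is convex, positively homogeneous of degree one, and $\C^2$ on $\R^{n+1}\setminus\{0\}$, and $M=\{g=1\}$.

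\textbf{Step 1: smoothing.} Mollify with a radial kernel to obtain $g_\epsilon:=g*\phi_\epsilon$. Then $g_\epsilon$ is $\C^\infty$ and convex, since a convolution of a convex function with a nonnegative kernel is convex. On the compact annulus $\{1/2\leq|x|\leq 2\rho_{\max}\}$ we have $g_\epsilon\to g$ in $\C^2$, because $g$ is $\C^2$ there. Since $\nabla g\neq 0$ on $M$ (by Euler's identity, $\langle\nabla g(x),x\rangle=g(x)=1$), the implicit function theorem shows that the level set $\{g_\epsilon=1\}$ is, for small $\epsilon$, a radial graph $\rho_\epsilon$ with $\rho_\epsilon\to\rho$ in $\C^2(\S^n)$. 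The sublevel set $\{g_\epsilon\leq 1\}$ is convex, so $M_\epsilon:=\{g_\epsilon=1\}$ is a $\C^\infty$ convex hypersurface.

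\textbf{Step 2: positive curvature.} Next set $G_{\epsilon,\delta}:=g_\epsilon+\delta|x|^2$ and $M_{\epsilon,\delta}:=\{G_{\epsilon,\delta}=1\}$. The Hessian of $G_{\epsilon,\delta}$ is at least $2\delta I$, so $G_{\epsilon,\delta}$ is strictly convex. Its gradient is close to $\nabla g$ near $M$, hence nonzero there. The second fundamental form of a level set is $\Hess G/|\nabla G|$ restricted to the tangent space, so $M_{\epsilon,\delta}$ has principal curvatures bounded below by $2\delta/|\nabla G_{\epsilon,\delta}|>0$. As $\delta\to 0$, $G_{\epsilon,\delta}\to g_\epsilon$ in $\C^2$ on the annulus, and the implicit function argument again gives $\C^2$ convergence of the radial functions. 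A diagonal choice $\epsilon_i,\delta_i\to 0$ then produces the required $K_i$.

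The main obstacle is the $\C^2$ convergence of the level sets, which means checking that the radial functions depend continuously in $\C^2$ on the defining function. I would handle it by solving $F(\lambda u)=1$ for $\lambda=\rho_F(u)$ via the implicit function theorem, using that $\partial_\lambda F(\lambda u)=\langle\nabla F,u\rangle$ is bounded away from zero uniformly near $M$; at $F=g$ this quantity equals $1/\rho$. The derivatives of $\rho_F$ up to order two are then rational expressions in the derivatives of $F$ up to order two, so they converge whenever $F$ converges in $\C^2$.
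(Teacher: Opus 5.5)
Your proposal is correct and follows essentially the same route as the paper: it mollifies the gauge function $g$ and adds $\delta|x|^2$ to get $\Hess\geq 2\delta I$, hence positive principal curvatures via $\Hess G/|\nabla G|$. It then gets $\C^2$ convergence of the radial functions from the implicit function theorem using $\partial_\lambda g(\lambda u)=1/\rho(u)>0$. Your two-step scheme with a diagonal choice is only a split version of the paper's single family $g*\eta_i+\delta_i|x|^2$. One small fix: take the inner radius of your annulus below $\min\rho$ (e.g.\ $\min\rho/2$) rather than $1/2$, so that the annulus actually contains $M$.
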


\begin{proof}
Translate $K$ so that $0\in\inte K$. Then the radial function $\rho$ of $K$ is $\C^2$.  Let $g\colon\R^{n+1}\to\R$ be 
the gauge function of $K$, given by $g(ru):=r/\rho(u)$ 
for $u\in\S^n$ and $r\geq0$. Then $g$ is convex, $K=\{g\leq1\}$, and $g$ 
is $\C^2$ on $\R^{n+1}\setminus\{0\}$. Let $\eta_i$ be smooth 
mollifiers with supports shrinking to the origin, let $\delta_i\to 0^+$, 
and set $g_i:=g*\eta_i+\delta_i|x|^2$. Then $g_i\to g$ in $\C^2$ on a 
fixed annular neighborhood of $M$, while $\Hess g_i\geq2\delta_iI$. 
For large $i$, $K_i:=\{g_i\leq1\}$ is a convex body 
with smooth boundary $M_i$. The second fundamental form of $M_i$ is given by
$\operatorname{II}_i(v,v)=\Hess g_i(v,v)/|\nabla g_i|>0$, 
so the principal curvatures of $M_i$ are positive. Since $g_i\to g$ in $\C^2$ near $M$, while 
$\partial_rg(\rho(u)u)=1/\rho(u)>0$, the implicit function theorem 
yields that $M_i\to M$ in the $\C^2$ topology.
\end{proof}

If $K$ is $\C^1$, let $N_K$ be its outward Gauss map, $c_K\colon P_{e^\perp}K\to\R$ denote the center function of its chords parallel to $e$, and define $f_K\colon M\to \R$ by
$$
f_K(p):=c_K(\pi p)\big\langle e,N_K(p)\big\rangle.
$$
This is the normal speed of $M$ under the symmetry 
variation \eqref{eq:Kt} at $t=1$.

\begin{lemma}\label{lem:normal-continuity}
Let $K_i$ and $K$ be $\C^1$ convex bodies with $K_i\to K$ in the Hausdorff metric. Suppose that $p_i\in\partial K_i$ converge to $p\in\partial K$. Then $f_{K_i}(p_i)\to f_K(p)$. In particular $f_K$ is continuous.
\end{lemma}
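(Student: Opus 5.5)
We have to show that $f_{K_i}(p_i)=c_{K_i}(\pi p_i)\langle e,N_{K_i}(p_i)\rangle\to c_K(\pi p)\langle e,N_K(p)\rangle$. The plan is to handle the normal factor and the center factor separately, and to split according to whether $\langle e,N_K(p)\rangle$ vanishes.

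\emph{Step 1: normals converge.} Take any limit point $\nu$ of the unit vectors $N_{K_i}(p_i)$ along a subsequence. For every $y\in K_i$ we have $\langle y-p_i,N_{K_i}(p_i)\rangle\le0$. Every $y\in K$ is a limit of points $y_i\in K_i$, by Hausdorff convergence, so passing to the limit gives $\langle y-p,\nu\rangle\le0$ for all $y\in K$. Thus $\nu$ is an outward support normal of $K$ at $p$. Since $K$ is $\C^1$, this normal is unique, so $\nu=N_K(p)$. By compactness of $\S^n$, the whole sequence satisfies $N_{K_i}(p_i)\to N_K(p)$.

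\emph{Step 2: bound the center factor.} The $K_i$ lie in a common ball, so the centers $c_{K_i}$ are uniformly bounded. Hence, if $\langle e,N_K(p)\rangle=0$, Step 1 gives $f_{K_i}(p_i)\to0=f_K(p)$, and this case is done.

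\emph{Step 3: the case $\langle e,N_K(p)\rangle\ne0$.} Here $x:=\pi p$ lies in $\Omega=\inte(\pi K)$. Indeed, if $x\in\partial\Omega$, the vertical hyperplane $\pi^{-1}$ of a supporting hyperplane of $\pi K$ at $x$ would support $K$ at $p$, and its normal is orthogonal to $e$; by uniqueness of normals this would force $\langle e,N_K(p)\rangle=0$. Now $x_i:=\pi p_i\to x$. Since $\pi K_i\to\pi K$ in the Hausdorff metric, a small ball around $x$ lies in $\inte(\pi K_i)$ for large $i$. It then suffices to show $u^\pm_{K_i}(x_i)\to u^\pm_K(x)$. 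For this I would use the standard fact that the concave functions $u^+_{K_i}$ converge locally uniformly to $u^+_K$ on $\Omega$; this follows from Hausdorff convergence of the hypographs together with equi-Lipschitz bounds on compact subsets of $\Omega$. The same holds for $u^-$. Alternatively, argue directly: any limit of the chord endpoints $(x_i,u^\pm_{K_i}(x_i))$ lies in the chord of $K$ over $x$. The top endpoint cannot converge to a lower point, because the points $(x,u^+_K(x)-\epsilon)$ with nearby $x'$ sit in the interior of $K$, hence in $K_i$ for large $i$. Combining this with Step 1 gives $c_{K_i}(\pi p_i)\to c_K(\pi p)$, and therefore $f_{K_i}(p_i)\to f_K(p)$.

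Continuity of $f_K$ follows by taking $K_i=K$. The main obstacle is Step 3, namely the convergence of the centers $c_{K_i}$. These need not converge over $\partial\Omega$, which is exactly why the vanishing of the normal factor there must be used to split off that case.
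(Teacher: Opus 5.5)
Your proposal is correct and follows the paper's proof step for step. The shared steps are:
\begin{itemize}
\item convergence of the normals, via limits of supporting half-spaces and uniqueness of the $\C^1$ normal;
\item the bounded-center argument when $\langle e,N_K(p)\rangle=0$;
\item the observation that $\pi p\in\Omega$ otherwise, followed by convergence of the chord endpoints $u^\pm_{K_i}(x_i)\to u^\pm_K(x)$.
\end{itemize}
For that last convergence the paper only says ``one quickly checks.'' Your direct argument supplies the missing detail: limits of the endpoints lie in the chord over $x$, while the interior points $(x',u^+_K(x)-\epsilon)$ of $K$ eventually lie in $K_i$.
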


\begin{proof}
First we show that $N_{K_i}(p_i)\to N_K(p)$. Since these are unit vectors, it suffices to check that every convergent subsequence of $N_{K_i}(p_i)$ has limit $N_K(p)$. Suppose then that $N_{K_i}(p_i)\to\nu$. For each $x\in K$ there are $x_i\in K_i$ with $x_i\to x$, and $\langle x_i-p_i,N_{K_i}(p_i)\rangle\leq0$; letting $i\to\infty$ yields $\langle x-p,\nu\rangle\leq0$. So $\nu$ is the outward normal of a supporting hyperplane of $K$ at $p$. Since $\partial K$ is $\C^1$, its supporting hyperplane at $p$ is unique, and thus $\nu=N_K(p)$, as claimed.
Next note that, since all the bodies lie in a fixed ball,  $c_{K_i}$ are uniformly bounded. Hence the assertion is immediate if $\langle e,N_K(p)\rangle=0$. Suppose that $\langle e,N_K(p)\rangle\ne0$, and put $x_i=\pi p_i$, $x=\pi p$. Then $x\in\Omega=\inte(\pi K)$. Otherwise, a supporting hyperplane of $\cl(\Omega)$ at $x$ would lift to a vertical tangent hyperplane of $K$ at $p$,  contrary to $\langle e,N_K(p)\rangle\ne0$.
Write the upper and lower endpoints of the chords parallel to $e$ of $K_i$ and $K$ as $u^{\pm}_i$ and $u^{\pm}$, respectively. One quickly checks that  $u^\pm_i(x_i)\to u^\pm(x)$. Consequently $c_{K_i}(x_i)\to c_K(x)$. Hence $f_{K_i}(p_i)\to f_K(p)$.
Finally, taking $K_i=K$ shows that $f_K$ is continuous, and completes the proof.
\end{proof}

The last lemma is not trivial because $c_K$ may have discontinuities on $\partial\Omega$.

\begin{lemma}\label{lem:deformation-continuity}
If $K_i\to K$ in the Hausdorff metric, then $(K_i)_t\to K_t$ in the Hausdorff metric for every $-1\leq t\leq1$.
\end{lemma}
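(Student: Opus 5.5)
The plan is to realize $K_t$ as the image of a single fixed convex body in $\R^{n+2}$ under a linear map depending on $t$, as hinted in the shadow-system remark after Corollary~\ref{cor:convexity}. Concretely, introduce an extra coordinate $s$ and set
$$
\Delta(K):=\bigl\{(x,z,s)\mid (x,z)\in K,\ \ z-u^+(x)\le s\le z-u^-(x)\bigr\}\subset\R^{n+1}\times\R .
$$
Equivalently, $\Delta(K)$ consists of the points $(x,z,z-w)$ with $(x,z),(x,w)\in K$. I want to choose it so that the linear map $T_t(x,z,s):=(x,\,z-\tfrac{1-t}{2}\cdot(\text{something linear in }z,s))$ sends it onto $K_t$. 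A cleaner choice is the following. Let $\Delta(K):=\{(x,a,b)\mid (x,a)\in K,\ (x,b)\in K\}$, which is convex because it is the intersection of two convex cylinders. Then put
$$
T_t(x,a,b):=\Bigl(x,\ \tfrac{1+t}{2}a-\tfrac{1-t}{2}b\Bigr).
$$
Over a fixed $x$, the set of values $\tfrac{1+t}{2}a-\tfrac{1-t}{2}b$ with $a,b\in[u^-(x),u^+(x)]$ is the interval whose endpoints are $\tfrac{1+t}{2}u^\pm-\tfrac{1-t}{2}u^\mp=\pm r+tc$. By \eqref{eq:Kt-chords} this gives $T_t(\Delta(K))=K_t$. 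So the first step is to verify this identity. It needs only the chord description and holds for all $-1\le t\le 1$.

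Next I will show that $K\mapsto\Delta(K)$ is continuous in the Hausdorff metric, which is the main obstacle. Given this, the conclusion follows: $T_t$ is a fixed linear map, hence Lipschitz, so $d_H(T_t A,T_t A')\le\|T_t\|\,d_H(A,A')$ gives $(K_i)_t\to K_t$, even uniformly in $t$.

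For the continuity of $\Delta$ I would split into two inclusions.

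First, upper semicontinuity. If $(x_i,a_i,b_i)\in\Delta(K_i)$ converge to $(x,a,b)$, then $(x_i,a_i)\in K_i$ converges to $(x,a)$, so $(x,a)\in K$, and likewise $(x,b)\in K$. Combined with uniform boundedness of the $K_i$ and a compactness argument, this shows that every point of $\Delta(K_i)$ is eventually close to $\Delta(K)$.

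Second, lower semicontinuity, which is the delicate part. Given $(x,a,b)\in\Delta(K)$, I need points of $\Delta(K_i)$ nearby. The difficulty is that approximating $(x,a)$ and $(x,b)$ separately by points of $K_i$ may produce different first coordinates $x$. I will avoid this with an interior argument. Fix $y_0$ in the interior of $K$; then the small ball $B_\delta(y_0)\subset K_i$ for large $i$. Shrink toward it: set $a'=(1-\epsilon)(x,a)+\epsilon y_0$ and $b'=(1-\epsilon)(x,b)+\epsilon y_0$. These share the same horizontal coordinate $x'$, and they lie in the interior of $K$ at depth about $\epsilon\delta$. Since Hausdorff convergence of convex bodies gives eventual containment of any compact subset of $\inte K$, both points lie in $K_i$ for large $i$. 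Hence $(x',a'_z,b'_z)\in\Delta(K_i)$, and it is within $O(\epsilon)$ of $(x,a,b)$. Making this uniform over the compact set $\Delta(K)$ with a finite $\epsilon$-net, or simply with the uniform interior depth, yields $\Delta(K)\subset\Delta(K_i)+O(\epsilon)B$ for large $i$.

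Together the two inclusions give $\Delta(K_i)\to\Delta(K)$, and applying $T_t$ completes the proof. I note that no continuity of $c$ on $\partial\Omega$ is needed, and this is exactly why the lifting to $\Delta(K)$ is used instead of working with the chord centers directly.
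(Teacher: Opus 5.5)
Your proof is correct and follows the paper's own route: the paper lifts to the same body $\Delta(K)=(K\times K)\cap H$, applies the same linear map $T_t$, obtains $K_t=T_t(\Delta(K))$, and concludes by continuity of linear images. The only difference is that the paper justifies $\Delta(K_i)\to\Delta(K)$ in one line, by noting that $H$ meets $\inte(K\times K)$. Your two-inclusion argument, which shrinks toward the interior point $(y_0,y_0)\in H$, is exactly the standard proof of that fact written out.
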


\begin{proof}
Let $H$ be the linear subspace of $(\R^{n+1})^2$ consisting of pairs with the same projection to $e^\perp$, and put $\Delta(K):=(K\times K)\cap H$. Since $H$ meets the interior of $K\times K$, Hausdorff convergence $K_i\times K_i\to K\times K$ implies $\Delta(K_i)\to\Delta(K)$.
Set $\alpha=(1+t)/2$ and $\beta=(1-t)/2$, and define on $H$ the linear map
$$
T_t\big((x,z_1),(x,z_2)\big)
:=
(x,\alpha z_1-\beta z_2).
$$
If the chord of $K$ over $x$ is $[u^-_K(x),u^+_K(x)]$, then the last coordinate of $T_t(\Delta(K))$ over $x$ ranges from $\alpha u^-_K(x)-\beta u^+_K(x)=-r_K(x)+tc_K(x)$ to $\alpha u^+_K(x)-\beta u^-_K(x)=r_K(x)+tc_K(x)$.
Thus $K_t=T_t(\Delta(K))$. Continuity of linear images in the Hausdorff metric now gives the assertion.
\end{proof}

Now we establish the main result of this work:

\begin{proof}[Proof of Theorem \ref{thm:main}]
Assuming $0\in \inte(K)$,
let $K_i$ and $M_i$ be as in Lemma~\ref{lem:approximation}, and $\rho_i$, $\rho$ denote the radial functions of $K_i$, $K$ respectively, so that $\rho_i\to\rho$ in $\C^2(\S^n)$; in particular, $K_i\to K$ in the Hausdorff metric.
Let $(K_i)_t$ be the symmetry variations of $K_i$, $W_{m,i}(t):=W_m((K_i)_t)$, and let $\sigma_{m,i}$ and $dA_i$ denote the $m$th mean curvature and area element of $M_i$. As in the proof of Proposition~\ref{prop:strict}, since $K_i$ is strictly convex, Lemmas~\ref{lem:variation} and \ref{lem:supportvariation}, together with \eqref{eq:curvature-representation}, yield 
$$
W_{m,i}'(1)
=
\frac{\ol{m}}{(n+1)\binom{n}{m}}
\int_{M_i}f_{K_i}\,\sigma_{m,i}\,dA_i.
$$

Parametrizing $M_i$ over $\S^n$ by $u\mapsto\rho_i(u)u$, the curvatures $\sigma_{m,i}$ and the area elements $dA_i$ depend continuously on $\rho_i$ up to second derivatives, and hence converge uniformly to the corresponding quantities for $M$. Furthermore, $f_{K_i}(\rho_i(u)u)\to f_K(\rho(u)u)$ uniformly, by Lemma~\ref{lem:normal-continuity} and compactness of $\S^n$. Consequently,
$$
\lim_{i\to\infty}W_{m,i}'(1)
=
\frac{\ol{m}}{(n+1)\binom{n}{m}}
\int_M f_K\,\sigma_m\,dA
=
\frac{\ol{m}}{(n+1)\binom{n}{m}}
\int_\Omega c_K
\bigl(\sigma_m^+-\sigma_m^-\bigr)\,dx
=0,
$$
where the second equality follows by change of variables as in the proof of Proposition~\ref{prop:strict}, and the last equality holds since $\sigma_m^+=\sigma_m^-$ by Lemma \ref{lem:curvature-equality}.
For $t<1$, convexity of $W_{m,i}$, which holds by Corollary~\ref{cor:convexity}, gives 
$$
W_{m,i}(t)\geq W_{m,i}(1)+W_{m,i}'(1)(t-1).
$$
 By Lemma~\ref{lem:deformation-continuity} and continuity of mixed volumes, $W_{m,i}(t)\to W_m(t)$ for every $t$. So letting $i\to\infty$ in the last  inequality gives $W_m(t)\geq W_m(1)$. On the other hand, $W_m(t)$ is convex, and we have $W_m(-1)=W_m(1)$. Hence $W_m(t)\leq W_m(1)$. Thus $W_m(t)$ is constant, and Theorem~\ref{thm:rigidity} completes the proof. 
\end{proof}

\appendix

\section{Rigidity of the Quermassintegrals}\label{sec:appendix}
Here we supply a proof of Theorem \ref{thm:rigidity}. Hadwiger's original 
argument \cite[pp.~256--261]{hadwiger1957} proceeds by induction on 
ambient dimension whereas ours is more direct and analytic. First we record the 
following observation, using the same notation as Section~\ref{sec:steiner}. 

\begin{lemma}\label{lem:affinity}
Suppose that $W_m(t)$ is constant. Then $A_E(t)$ is affine for every $E\in Gr(n+1,\ol{m})$. Furthermore, if $P_Ee\ne0$, then $\ell_\xi(t)$ is affine  for every $\xi\in\inte(P_{\ol E}\Omega)$.
\end{lemma}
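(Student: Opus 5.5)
The plan is to use the standard fact that an average of convex functions is affine (here constant) only if each summand is affine, up to a null set, and then remove the null-set ambiguity by continuity. By \eqref{eq:CK2}, $W_m(t)$ is a positive multiple of $\int A_E(t)\,dE$. By Corollary~\ref{cor:convexity}, each $A_E$ is convex on $[-1,1]$.

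For the first claim, fix $-1\le t_0<t_1\le 1$ and $s\in(0,1)$, and set $t_s=(1-s)t_0+st_1$. Consider the defect
$$
D_E:=(1-s)A_E(t_0)+sA_E(t_1)-A_E(t_s),
$$
which is nonnegative by convexity. Integrating over $Gr(n+1,\ol m)$ and using that $W_m$ is constant gives $\int D_E\,dE=0$, so $D_E=0$ for almost every $E$.

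To pass from almost every $E$ to every $E$, I will show that $E\mapsto A_E(t)=|P_EK_t|$ is continuous on the Grassmannian. This holds because projections $P_E$ depend continuously on $E$ and volume is continuous under Hausdorff convergence of the images; if $P_EK_t$ is lower dimensional, the volumes tend to $0$ correctly. Since the null set has dense complement and $D_E$ is continuous in $E$, we get $D_E\equiv0$. Doing this for all $t_0,t_1,s$ shows $A_E$ is affine. Alternatively, one can take a countable dense set of triples $(t_0,t_1,s)$ and use that convex functions are continuous in $t$ on the open interval, with a little care at the endpoints $\pm1$, where convexity plus finiteness still gives the needed control.

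For the second claim, fix $E$ with $\ol e\ne0$ and repeat the argument with \eqref{eq:fubini}. We have $A_E(t)=\int_{P_{\ol E}\Omega}\ell_\xi(t)\,d\xi$, each $\ell_\xi$ is convex in $t$ by Lemma~\ref{lem:fibration}, and $A_E$ is affine. So the defect $(1-s)\ell_\xi(t_0)+s\ell_\xi(t_1)-\ell_\xi(t_s)\ge0$ integrates to zero, and therefore vanishes for almost every $\xi$. By Lemma~\ref{lem:fibration}, $\xi\mapsto\ell_\xi(t)$ is continuous on $\inte(P_{\ol E}\Omega)$ for each fixed $t$. Hence the defect is continuous in $\xi$ there, and it vanishes for every interior $\xi$. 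Since this holds for all $t_0,t_1,s$, each such $\ell_\xi$ is affine. Boundary points of $P_{\ol E}\Omega$ form a null set, which is why the claim is restricted to interior $\xi$.

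I expect the main obstacle to be the continuity of $E\mapsto A_E(t)$ on the Grassmannian, which is needed to upgrade "almost every $E$" to "every $E$", especially near subspaces where the projection degenerates. The plan is to identify each $E$ with a rotation of a fixed subspace, so that $P_EK_t$ becomes a rotated copy of $P_{E_0}(R^{-1}K_t)$. Hausdorff continuity of the body under rotations, combined with continuity of volume for convex bodies in a fixed $\ol m$-dimensional space (including degenerate limits), then finishes the argument.
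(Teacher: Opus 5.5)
Your proposal is correct and follows the paper's proof essentially step for step: the convexity deficit is nonnegative and has vanishing average (via \eqref{eq:CK2} for $A_E$, and via \eqref{eq:fubini} for $\ell_\xi$), and continuity in $E$ (resp.\ $\xi$) upgrades ``almost everywhere'' to ``everywhere''. Your rotation argument for the continuity of $E\mapsto|P_EK_t|$ just spells out what the paper asserts in one line. The worry about degenerate projections is unnecessary, since each $K_t$ has nonempty interior and so $P_EK_t$ is always full-dimensional in $E$.
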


\begin{proof}
Fix $E\in Gr(n+1,\ol{m})$, $0<\theta<1$, and $t_0,t_1\in[-1,1]$. Since $A_E(t)$ is convex by Corollary~\ref{cor:convexity}, the convexity deficit
$$
\delta(E)
:=
(1-\theta)A_E(t_0)+\theta A_E(t_1)-A_E\bigl((1-\theta)t_0+\theta t_1\bigr)
$$
is nonnegative. By \eqref{eq:CK2}, the average of $\delta$ over $Gr(n+1,\ol{m})$ is proportional to the corresponding deficit of $W_m$, which vanishes, as $W_m(t)$ is constant by assumption. Furthermore, $\delta$ is continuous, since $P_EK_t$ depends continuously on $E$, with respect to the Hausdorff metric, and hence so does its volume $A_E(t)$. Hence $\delta$ vanishes identically. Since $t_0$, $t_1$, and $\theta$ were arbitrary, $A_E$ is affine as claimed.

Next suppose that $P_Ee\ne0$, and fix $\xi\in\inte(P_{\ol E}\Omega)$. By Lemma~\ref{lem:fibration}, $\ell_\xi$ is convex in $t$, and continuous in $\xi$ on $\inte(P_{\ol E}\Omega)$ for each fixed $t$. So, for fixed $t_0,t_1$, and $\theta$ as above, the convexity deficit of $\ell_\xi$ is nonnegative and continuous in $\xi$, while its integral vanishes, by \eqref{eq:fubini}, because $A_E$ is affine. Hence the deficit vanishes for every interior $\xi$. Since $t_0,t_1$, and $\theta$ are arbitrary, $\ell_\xi$ is affine, which completes the proof.
\end{proof}

\begin{proof}[Proof of Theorem~\ref{thm:rigidity}]
Recall that $W_m(t):=W_m(K_t)$ and $K_1:=K$.
By Corollary~\ref{cor:convexity}, $W_m(t)$ is convex. Since 
$W_m(-1)=W_m(1)$, it follows that $W_m(0)\leq W_m(1)$. 
Suppose next that $W_m(0)=W_m(1)$. Then convexity forces $W_m(t)$ to be 
constant. It remains to show that $K$ is symmetric with 
respect to a hyperplane orthogonal to $e$.

Since $u^+$ is concave and $u^-$ is convex, the functions $r$ and $c$ are locally Lipschitz on $\Omega$ \cite{schneider2014}*{Thm.~1.5.3}, and have one-sided directional derivatives
$$
r'(x;v):=\lim_{\lambda\to0^+}\frac{r(x+\lambda v)-r(x)}{\lambda},
$$
and similarly $c'(x;v)$, at every point $x\in\Omega$ in every direction $v\in e^\perp$ \cite{schneider2014}*{Thm.~1.5.4}. It suffices to show that $c'(x;v)=0$. Indeed, then $c$ is constant along every line segment in $\Omega$, since it is Lipschitz. Hence, since $\Omega$ is connected, $c$ is constant, say $c\equiv z_0$. So reflection in the hyperplane $\langle p,e\rangle=z_0$ preserves the chords of $K$ over $\Omega$. Since $\inte(K)$ projects into $\Omega$, and $K=\cl(\inte(K))$, $K$ is symmetric.

Fix $x_0\in\Omega$, a direction $v_0\in e^\perp$, and an $m$-dimensional subspace $V\subset e^\perp$ containing $v_0$. We will show that $c'(x_0;v_0)=0$, which will complete the proof. We may choose $a\in V$ such that
\begin{equation}\label{eq:av}
r(x_0+v)\leq r(x_0)+\langle a,v\rangle,
\qquad\text{and}\qquad
\langle a,v_0\rangle=r'(x_0;v_0),
\end{equation}
for all $v\in V$ with $x_0+v\in\cl(\Omega)$. Indeed, since $u^+$ is concave and $u^-$ is convex, $r$ is concave on $\cl(\Omega)$. Hence $v\mapsto r'(x_0;v)$ is a finite superlinear function on $V$, i.e., concave and positively homogeneous \cite{schneider2014}*{Lem.~1.5.6}. Let $a\in V$ be a supergradient of this function at $v_0$, which exists since the function is finite and concave. So 
$$
r'(x_0;v)\leq r'(x_0;v_0)+\langle a,v-v_0\rangle
$$ 
for all $v\in V$. Setting $v=\lambda v_0$ and using positive homogeneity yields 
$(\lambda-1)r'(x_0;v_0)\leq(\lambda-1)\langle a,v_0\rangle$; taking 
$\lambda>1$ and $\lambda<1$ then gives $\langle a,v_0\rangle=r'(x_0;v_0)$, 
and consequently $r'(x_0;v)\leq\langle a,v\rangle$. Since the difference quotients of the concave function $r$ are nonincreasing, $r(x_0+v)-r(x_0)\leq r'(x_0;v)$, which establishes \eqref{eq:av}.

Now put $\ol E:=V^\perp\cap e^\perp$, $\ol e:=(e-a)/|e-a|^2$, and
$
E:=\ol E\oplus\vspan\{\ol e\}.
$
Then $\dim E=\ol{m}$. Since $a\in V$ and $a\perp e$, we have $|e-a|^2=1+|a|^2$, so $\ol e$ is well-defined and nonzero. Furthermore $\ol e\perp\ol E$, so $P_E=P_{\ol E}+P_{\ol e}$. Hence
$$
P_Ee
=
P_{\ol E}e+P_{\ol e}e
=
0+\frac{\langle e,\ol e\rangle}{|\ol e|^2} \ol e
=
\ol e.
$$
So Lemma~\ref{lem:fibration} applies, with $\ell_\xi$ and $L^{\pm}_\xi$ as defined  there. Set $\xi_0:=P_{\ol E}x_0$, which lies in $\inte(P_{\ol E}\Omega)$ since $x_0\in\Omega$. By Lemma~\ref{lem:affinity}, $\ell_{\xi_0}(t)$ is affine.

Next we compute $\ell_{\xi_0}(t)$. For $x\in e^\perp$, $P_{\ol E}x=\xi_0$ if and only if $x=x_0+v$ for some $v\in V$. Furthermore, $P_{\ol e}x=-\langle a,x\rangle\ol e$, since $\langle x,\ol e\rangle/|\ol e|^2=\langle x,e-a\rangle=-\langle a,x\rangle$. Hence, by Lemma~\ref{lem:fibration},
$$
L^+_{\xi_0}(t)=
-\langle a,x_0\rangle
+
\sup_{v\in V,\;x_0+v\in\cl(\Omega)}
\bigl\{r(x_0+v)-\langle a,v\rangle+t\,c(x_0+v)\bigr\}.
$$
Since $\ell_{\xi_0}=\bigl(L^+_{\xi_0}-L^-_{\xi_0}\bigr)|\ol e|$ is affine, while $L^+_{\xi_0}$ is convex and $L^-_{\xi_0}$ is concave by Lemma~\ref{lem:fibration}, both $L^{\pm}_{\xi_0}$ are affine. 
By the first inequality in \eqref{eq:av}, $r(x_0+v)-\langle a,v\rangle\leq r(x_0)$;
 so 
 $
 L^+_{\xi_0}(0)=r(x_0)-\langle a,x_0\rangle.
 $
 On the other hand, taking $v=0$ in the supremum gives 
 $
 L^+_{\xi_0}(t)\geq r(x_0)+tc(x_0)-\langle a,x_0\rangle.
 $ 
Since $L^+_{\xi_0}$ is affine and equality holds at the interior point $t=0$, it follows that 
 $$
 L^+_{\xi_0}(t)=r(x_0)+tc(x_0)-\langle a,x_0\rangle.
 $$

Comparing the last two displayed expressions for $L^+_{\xi_0}$ yields, for every $v\in V$ with $x_0+v\in\cl(\Omega)$, and every $-1\leq t\leq1$, 
$$
r(x_0+v)-r(x_0)-\langle a,v\rangle
+t\bigl(c(x_0+v)-c(x_0)\bigr)
\leq0.
$$
Set $v=\lambda v_0$, where $\lambda>0$ is small, divide by $\lambda$, and let $\lambda\to0^+$. Then the terms involving $r$ converge to $r'(x_0;v_0)-\langle a,v_0\rangle$, which vanishes by \eqref{eq:av}, and we obtain
$
t\,c'(x_0;v_0) \leq0.
$
Setting $t=\pm1$ yields $c'(x_0;v_0)=0$, as desired.
\end{proof}

\section{First Variation of the Quermassintegrals}\label{appendix:formula}
Here we show that \eqref{eq:key} holds without the strict
convexity assumption, which yields a more direct proof of Theorem \ref{thm:main}. The idea is to differentiate the volumes of the
parallel bodies $K_t+\rho B$, and then recover $W_m'(1)$ from
Steiner's formula by polynomial interpolation.
Put $s:=t-1$, so
$$
K_{1+s}=\{p+s\,c(\pi p)e\mid p\in K\}.
$$
Let 
$d_K\colon\R^{n+1}\to\R$ denote the distance function of $K$, and $P_K\colon\R^{n+1}\to K$ be the nearest point
projection.
 Recall that
$f_K(p):=c(\pi p)\langle e,N(p)\rangle$.  The key observation is that, even though the deformation $t\mapsto K_t$ 
is not smooth, its effect on the distance function is differentiable at 
$t=1$, with derivative $-f_K\circ P_K$:

\begin{lemma}\label{lem:distancevariation}
As $s\to0^-$,
$$
d_{K_{1+s}}
=
d_K-s\,(f_K\circ P_K)+o(|s|),
$$
uniformly on compact subsets of $\R^{n+1}\setminus K$.
\end{lemma}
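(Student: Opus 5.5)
We prove the expansion pointwise and uniformly by sandwiching $d_{K_{1+s}}(y)$, for $y$ in a compact set $Q\subset\R^{n+1}\setminus K$, between two quantities that agree to first order. Here $s<0$ is small, so $d_K\geq\delta>0$ on $Q$. Put $p:=P_K(y)$ and $\nu:=(y-p)/|y-p|$. Since $M$ is $\C^1$, $\nu=N(p)$. Also $d_K(y)=\langle y,\nu\rangle-h_K(\nu)$, and for any convex body $L$ with $y\notin L$,
\[
d_L(y)=\max_{w\in\S^n}\bigl(\langle y,w\rangle-h_L(w)\bigr),
\]
with the maximum attained at the unit vector from $P_L(y)$ to $y$.

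\emph{Upper bound.} Take $w=\nu$ in this formula for $L=K_{1+s}$; this gives a lower bound $d_{K_{1+s}}(y)\geq\langle y,\nu\rangle-h_{1+s}(\nu)$. For the reverse inequality, use the point $p+s\,c(\pi p)e\in K_{1+s}$ to get
\[
d_{K_{1+s}}(y)\leq\bigl|y-p-s\,c(\pi p)e\bigr|=d_K(y)-s\,c(\pi p)\langle e,\nu\rangle+O(s^2),
\]
uniformly, since $c$ is bounded and $d_K\geq\delta$. The subtlety is that $c(\pi p)$ is the value of $c$ at $p$'s own chord. This is harmless: $p+s\,c(\pi p)e$ lies in $K_{1+s}$ by the definition \eqref{eq:Kt} for every $p\in K$, even where $c$ is discontinuous.

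\emph{Lower bound.} Use the representation
\[
h_{1+s}(w)=\max_{q\in K}\bigl\{\langle q,w\rangle+s\,c(\pi q)\langle e,w\rangle\bigr\}
\]
from the proof of Lemma~\ref{lem:supportvariation}. Let $w_s$ be the optimal direction for $K_{1+s}$ and $q_s$ a maximizing point. Then
\[
d_{K_{1+s}}(y)=\langle y,w_s\rangle-\langle q_s,w_s\rangle-s\,c(\pi q_s)\langle e,w_s\rangle\geq d_K(y)-s\,c(\pi q_s)\langle e,w_s\rangle,
\]
because $\langle y,w_s\rangle-\langle q_s,w_s\rangle\geq\langle y,w_s\rangle-h_K(w_s)$ and the support-function formula gives $d_K(y)\leq\dots$. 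Combining the two bounds, it remains to show
\[
c(\pi q_s)\langle e,w_s\rangle\to c(\pi p)\langle e,\nu\rangle=f_K(p)
\]
uniformly in $y\in Q$.

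\emph{Main obstacle.} This convergence is the crux, because $K$ need not be strictly convex and $c$ may be discontinuous on $\partial\Omega$. For the directions, Lemma~\ref{lem:deformation-continuity} gives $K_{1+s}\to K$ in Hausdorff distance, so $P_{K_{1+s}}(y)\to p$ and $w_s\to\nu$ uniformly on $Q$. For the points, $q_s$ lies within $O(|s|)$ of the face $F(K,w_s)$ up to an error of order $\langle q_s,w_s\rangle\geq h_K(w_s)-O(|s|)$, so every limit point of $q_s$ lies on the face $F(K,\nu)$. Then split into two cases.

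If $\langle e,\nu\rangle=0$, the product tends to $0=f_K(p)$ because $c$ is bounded.

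If $\langle e,\nu\rangle\neq0$, then $p$ and $q_s$ lie on the graph part over $\Omega$, by the argument in Lemma~\ref{lem:normal-continuity}. The face $F(K,\nu)$ may still be a nontrivial segment, so $q_s$ need not converge to $p$. However, the maximizing $q_s$ satisfies a first-order optimality condition that singles out points where $c\langle e,\nu\rangle$ is maximal on the face. More decisively, $y$ lies in the outer normal cone at $p$ only, and the upper bound already sandwiches the value. So the plan is to replace the crude estimate by
\[
d_{K_{1+s}}(y)\geq|y-q'_s|,
\]
where $q'_s$ is the nearest point and $q'_s=q''+s\,c(\pi q'')e$ with $q''\in K$. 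Then
\[
d_{K_{1+s}}(y)\geq\bigl\langle y-q''-s\,c(\pi q'')e,\,w_s\bigr\rangle,
\]
and we show $q''\to p$. This holds since $|y-q''|\leq d_K(y)+O(|s|)$ and the nearest point to $y$ in $K$ is unique, with uniformity on $Q$ coming from continuity of $P_K$. Continuity of $c$ at interior points $\pi p\in\Omega$, together with the lemma's argument for $c(\pi q'')\to c(\pi p)$, then closes the estimate. We expect this uniqueness-of-nearest-point argument, rather than the support-face argument, to be the workable route.
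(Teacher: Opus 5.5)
Your final route is essentially the paper's proof. You bound above via $p+s\,c(\pi p)e$, pull the nearest point of $K_{1+s}$ back to $q''\in K$ (the paper's $p_s$), and get $q''\to P_K(y)$ from $|y-q''|\le d_K(y)+O(|s|)$ and uniqueness of the nearest point; the paper makes that last step quantitative through $\langle y-p,q''-p\rangle\le 0$, giving $|q''-p|^2\le 4C|s|\,(d_K(y)+C|s|)$ with $C:=\sup|c|$.

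The support-function inequality in your discarded ``crude estimate'' runs the wrong way, since in fact $d_K(y)\ge\langle y,w_s\rangle-h_K(w_s)$. So close the final step with $\nu_s:=(y-q'')/|y-q''|$ in place of $w_s$: Cauchy--Schwarz and $|y-q''|\ge d_K(y)$ give $d_{K_{1+s}}(y)\ge\langle y-q''-s\,c(\pi q'')e,\nu_s\rangle\ge d_K(y)-s\,c(\pi q'')\langle e,\nu_s\rangle$. Also make the case split on $\langle e,N(p)\rangle$ uniform by the compactness argument the paper uses.
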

\begin{proof}
It suffices to establish the expansion uniformly for $d_K(q)\in I$, 
where $I\subset(0,\infty)$ is an arbitrary compact interval.
Let $C:=\sup_{\pi K}|c|$. Since each
chord of $K$ is translated by at most $C|s|$, the Hausdorff distance
$\dist_H(K_{1+s},K)\leq C|s|$.
Fix $q$ with $d_K(q)\in I$, put $p:=P_Kq$, and choose $p_s\in K$ so
that $d_{K_{1+s}}(q)=|q-p_s-s\,c(\pi p_s)e|$. Then
$$
d_K(q)
\leq
|q-p_s|
\leq
d_{K_{1+s}}(q)+C|s|
\leq
d_K(q)+2C|s|.
$$
Since $K$ is convex and $p$ is its nearest point to $q$,
$\langle q-p,p_s-p\rangle\leq0$. Hence
$|q-p_s|^2\geq d_K(q)^2+|p_s-p|^2$, and consequently
$|p_s-p|^2\leq4C|s|(d_K(q)+C|s|)$. Thus $p_s\to p$ uniformly for
$d_K(q)\in I$.
Set 
$$
\nu_s:=(q-p_s)/|q-p_s|
\qquad \text{and} \qquad
\nu:=(q-p)/d_K(q)=N(p).
$$
 Then $\nu_s\to\nu$ uniformly. Moreover,
$c(\pi p_s)\langle e,\nu_s\rangle\to f_K(p)$ uniformly. Otherwise
there would be $\epsilon>0$, $s_i\to0^-$, and points $q_i$ with
$d_K(q_i)\in I$ for which the convergence fails. After passing to a
subsequence, $q_i\to q$, and hence $p_i:=P_Kq_i\to p:=P_Kq$,
$p_{s_i}\to p$, and $\nu_{s_i}\to N(p)$. If
$\langle e,N(p)\rangle=0$, boundedness of $c$ and continuity of $N$
show that both
$c(\pi p_{s_i})\langle e,\nu_{s_i}\rangle$ and $f_K(p_i)$ tend to
zero. Otherwise $\pi p\in\Omega$, where $c$ is continuous, and both
quantities tend to $f_K(p)$, a contradiction.
For vectors $a$ bounded away from the origin and bounded vectors $b$,
$|a-sb|=|a|-s\langle b,a/|a|\rangle+O(s^2)$ uniformly. Consequently,
\begin{gather*}
d_{K_{1+s}}(q)
\leq
|q-p-s\,c(\pi p)e|
=
d_K(q)-s f_K(p)+O(s^2),\\
d_{K_{1+s}}(q)
=
|q-p_s|-s\,c(\pi p_s)\langle e,\nu_s\rangle+O(s^2)
\geq
d_K(q)-s f_K(p)+o(|s|),
\end{gather*}
which completes the proof, since $O(s^2)=o(|s|)$.
\end{proof}

Now fix $\rho>0$ small enough that 
$\Phi(p,\tau):=p+\tau N(p)$
is a $\C^1$ diffeomorphism from $M\times(0,2\rho)$ to the tubular neighborhood
$T:=\{q\mid 0<d_K(q)<2\rho\}$ of $M_\rho:=\Phi(M,\rho)=\partial (K+\rho B)$. The Jacobian of $\Phi$ is
$
J(p,\tau)=\prod_{i=1}^n\bigl(1+\tau\kappa_i(p)\bigr),
$
where $\kappa_i$ are the principal curvatures of $M$. For every $q$, 
$$
|d_{K_{1+s}}(q)-d_K(q)|\leq\dist_H(K_{1+s},K)\leq C|s|.
$$
 Hence the 
symmetric difference of $K_{1+s}+\rho B$ and $K+\rho B$ lies in 
$\{|d_K-\rho|\leq C|s|\}$, which is contained in $T$ for small $s$. So 
we may compute the volume of this region in the coordinates $\Phi$, 
where $K+\rho B$ is given by $\tau\leq\rho$. Furthermore, by 
Lemma~\ref{lem:distancevariation}, there are $\epsilon_s\to0$ such that
$$
\bigl|d_{K_{1+s}}(\Phi(p,\tau))-\tau+s f_K(p)\bigr|
\leq
\epsilon_s|s|
$$
for $p\in M$, $\rho/2\leq\tau\leq3\rho/2$, and all sufficiently small 
$s<0$. Hence, if we set $\rho_s(p):=\rho+s f_K(p)$, then we have
$$
\bigl\{\tau\leq\rho_s(p)-\epsilon_s|s|\bigr\}
\subset
K_{1+s}+\rho B
\subset
\bigl\{\tau\leq\rho_s(p)+\epsilon_s|s|\bigr\}
$$
within $T$.  Therefore
\begin{multline*}
\int_M\int_\rho^{\rho_s(p)-\epsilon_s|s|}J(p,\tau)\,d\tau\,dA(p)
\leq
|K_{1+s}+\rho B|-|K+\rho B|\\
\leq
\int_M\int_\rho^{\rho_s(p)+\epsilon_s|s|}J(p,\tau)\,d\tau\,dA(p).
\end{multline*}
Dividing by $s<0$ reverses the inequalities, and both bounds converge
to the same limit. Hence
\begin{equation}\label{eq:parallelvariation}
\lim_{s\to0^-}
\frac{|K_{1+s}+\rho B|-|K+\rho B|}{s}
=
\int_M f_K(p)
\prod_{i=1}^n\bigl(1+\rho\kappa_i(p)\bigr)\,dA(p).
\end{equation}
Since
$\prod_{i=1}^n(1+\rho\kappa_i)=\sum_{k=0}^n\sigma_k\rho^k$, where
$\sigma_0:=1$, the right hand side of \eqref{eq:parallelvariation} is
a polynomial in $\rho$ whose coefficient of $\rho^k$ is
$\int_M f_K\sigma_k\,dA$. On the other hand, Steiner's formula \eqref{eq:steiner} gives
$$
\frac{|K_{1+s}+\rho B|-|K+\rho B|}{s}
=
\sum_{k=0}^n
\binom{n+1}{k}
\frac{W_k(K_{1+s})-W_k(K)}{s}\rho^k.
$$
Evaluating at $n+1$ distinct sufficiently small values of $\rho$ and
letting $s\to0^-$ shows that the coefficients converge. Comparing the
coefficient of $\rho^m$ yields
$$
W_m'(1)
=
\frac{1}{\binom{n+1}{m}}\int_M f_K\,\sigma_m\,dA.
$$
Finally,
$1/\binom{n+1}{m}=\ol{m}/((n+1)\binom{n}{m})$, while the change of
variables over the graphs of $u^\pm$, as in the proof of
Proposition~\ref{prop:strict}, gives
$\int_M f_K\sigma_m\,dA
=
\int_\Omega c(x)(\sigma_m^+(x)-\sigma_m^-(x))\,dx$.
Thus \eqref{eq:key} holds for every $\C^2$ convex body $K$.

\section*{Acknowledgement}
The author thanks Renato Bettiol for bringing Li's conjecture to his attention. Thanks also to Yanyan Li and Emanuel Milman for useful comments.
The AI tools Claude (Anthropic) and ChatGPT (OpenAI) were used during the preparation of this manuscript. The
author has reviewed all AI-assisted content and takes full responsibility for the final manuscript.

\bibliography{references}

\end{document}